\RequirePackage[l2tabu,orthodox]{nag}
\documentclass[reqno,12pt,a4paper,oneside]{amsart}
\usepackage
           {geometry}
\usepackage{ifxetex,ifluatex}
\newif\ifxetexorluatex
\ifxetex
  \xetexorluatextrue
\else
  \ifluatex
    \xetexorluatextrue
  \else
    \xetexorluatexfalse
  \fi
\fi

%
\ifxetexorluatex
  \usepackage{fontspec}           
  \usepackage{unicode-math}       
  \usepackage{polyglossia}        
  \setmainlanguage{english}
\else
  \usepackage[utf8]{inputenc}     
  \usepackage[english]{babel}     

  \usepackage{amsfonts}
  \usepackage{amssymb}
  \newcommand{\nsubset}{\not\subset}
  \usepackage{lmodern}
  \usepackage[T1]{fontenc}
\fi
%

\usepackage{fixltx2e}   
\usepackage{booktabs}   
\usepackage{enumitem}   
\usepackage{xspace}     
\usepackage{xcolor}     

\usepackage{amsmath}    
\usepackage{mathtools}  
\usepackage{stmaryrd} 

\usepackage{graphics}
\usepackage[all,rotate]{xy} 





\definecolor{linkblue}{RGB}{1,1,190}
\definecolor{citegreen}{RGB}{1,190,1}
\usepackage[linkcolor=linkblue
           ,citecolor=citegreen
           ,ocgcolorlinks        
           ,bookmarksopen=True,  
           ]{hyperref}
\makeatletter
  \AtBeginDocument{
    \hypersetup{
      pdftitle  = {\@title},
      pdfauthor = {\authors}     
    }
  }
\makeatother

\usepackage{amsthm}

%

\usepackage[capitalize    
           ]{cleveref}

%
\theoremstyle{definition}
\newtheorem {defi}          {Definition}[section]

\theoremstyle{plain}
\newtheorem {thm}     [defi]{Theorem}
\newtheorem*{thm*}          {Theorem}
\newtheorem {lemma}   [defi]{Lemma}
\newtheorem {prop}    [defi]{Proposition}

\theoremstyle{remark}
\newtheorem {remark}  [defi]{Remark}

\newtheorem*{exm*}          {Example}
%

%
\makeatletter

\newcommand{\sc@lettershortcut}[3]{%
  \expandafter\providecommand\csname #2#3\endcsname{#1{#3}}%
}

\newcommand{\sc@shortcuts}[3]{%
  \count@=0
  \loop
  \advance\count@ 1
  \edef\tmp@{%
    \noexpand\sc@lettershortcut\unexpanded{{#1}}{#2}{#3\count@}
  }
  \tmp@
  \ifnum\count@<26
  \repeat
}

\newcommand{\defshortcuts}[2]{\sc@shortcuts{#1}{#2}{\@alph}}

\newcommand{\defShortcuts}[2]{\sc@shortcuts{#1}{#2}{\@Alph}}

\makeatother
%


\defShortcuts{\mathbb}{b}
\defShortcuts{\mathcal}{c}
\defShortcuts{\mathfrak}{f}
\defShortcuts{\mathsf}{s}
\defshortcuts{\mathfrak}{f}
\defshortcuts{\mathsf}{s}

%
\def\rfop{*}
\makeatletter
\newcommand\rigidfactorization[2][]{%
  \def\rf@delim{\rfop}
  \newif\ifrf@notfirst
  #1
  \@for\next:=#2\do{%
    \ifrf@notfirst
      \rf@delim
    \fi
    \rf@notfirsttrue
    \next
  }%
}
\makeatother
\newcommand\rf\rigidfactorization
%

%
\newcommand{\quo}{\mathsf{q}}                     

\ifxetexorluatex
  
\else
  
\fi
\providecommand{\val}{\mathsf{v}}                 


\newcommand{\cc}[2]{{({#1}\!:\!{#2})}}          

\DeclarePairedDelimiter{\card}{\lvert}{\rvert}

\DeclarePairedDelimiter{\kcls}{\langle}{\rangle}

\DeclareMathOperator{\End}{End}
\DeclareMathOperator{\Aut}{Aut}

\DeclareMathOperator{\id}{id}

\DeclareMathOperator{\spec}{spec}

\DeclareMathOperator{\udim}{udim}
\DeclareMathOperator{\height}{ht}
\DeclareMathOperator{\supp}{supp}

\newcommand{\pone}{\mathfrak X}

%


\setlist[enumerate,1]{label=\textup{(\arabic*)}, ref=(\arabic*), leftmargin=0.75cm}
\setlist[enumerate,2]{label=(\roman*), ref=(\roman*)}

\newenvironment{equivenumerate}[1][]
{\enumerate[label=\textup{(\alph*)},ref=(\alph*),#1]}
{\endenumerate}



\makeatletter
\def\sr@stripleadingcol::#1{#1}
\def\sr@dosubref#1#2:#3 #4{\if\relax#3\relax%
  \def\first{\sr@stripleadingcol #4}%
  #1{\first}\ref{\first:#2}%
\else%
  \sr@dosubref#1#3 {#4:#2}%
\fi}%

\newcommand{\subref}[1]{\sr@dosubref\cref#1: :\relax}
\newcommand{\Subref}[1]{\sr@dosubref\Cref#1: :\relax}
\makeatother
%

\AtBeginDocument{\renewcommand{\setminus}{\smallsetminus}}

\usepackage{microtype}

\title{Every abelian group is the class group of a simple Dedekind domain}
\author[D.~Smertnig]{Daniel Smertnig}

\address{University of Graz\\
         NAWI Graz\\
         Institute for Mathematics and Scientific Computing\\
         Heinrichstra\ss e 36\\
         8010 Graz, Austria}
\email{daniel.smertnig@uni-graz.at}
\thanks{The author was supported by the Austrian Science Fund (FWF) project P26036-N26.}

\keywords{simple Dedekind prime rings, ideal class groups}
\subjclass[2010]{Primary 16E60; Secondary 16N60, 16P40, 19A49}

\begin{document}

\begin{abstract}
A classical result of Claborn states that every abelian group is the class group of a commutative Dedekind domain.
Among noncommutative Dedekind prime rings, apart from PI rings, the simple Dedekind domains form a second important class.
We show that every abelian group is the class group of a noncommutative simple Dedekind domain.
This solves an open problem stated by Levy and Robson in their recent monograph on hereditary Noetherian prime rings.
\end{abstract}

\maketitle

\section{Introduction}

Throughout the paper, a domain is a not necessarily commutative unital ring in which the zero element is the unique zero divisor.
In \cite{claborn66}, Claborn showed that every abelian group $G$ is the class group of a commutative Dedekind domain. An exposition is contained in \cite[Chapter III \S14]{fossum73}.
Similar existence results, yielding commutative Dedekind domains which are more geometric, respectively number theoretic, in nature, were obtained by Leedham-Green in \cite{leedham-green72} and Rosen in \cite{rosen73,rosen76}.
Recently, Clark in \cite{clark09} showed that every abelian group is the class group of an elliptic commutative Dedekind domain, and that this domain can be chosen to be the integral closure of a PID in a quadratic field extension.
See Clark's article for an overview of his and earlier results.
In commutative multiplicative ideal theory also the distribution of nonzero prime ideals within the ideal classes plays an important role.
For an overview of realization results in this direction see \cite[Chapter 3.7c]{ghk06}.

A ring $R$ is a Dedekind prime ring if every nonzero submodule of a (left or right) progenerator is a progenerator (see \cite[Chapter 5]{mcconnell-robson01}).
Equivalently, $R$ is a hereditary Noetherian prime ring which is also a maximal order in its simple Artinian quotient ring.
A Dedekind domain is a Dedekind prime ring $R$ which is also a domain (equivalently, $\udim R_R = \udim {}_R R = 1$).
To a Dedekind prime ring $R$ one can associate an (abelian) class group $G(R)$ in such a way that $K_0(R) \cong G(R) \times \bZ$.
Equivalently, $G(R)$ can also be interpreted as a group of stable isomorphism classes of essential right ideals of $R$.
Since $K_0$ is Morita invariant, the same holds for the class group.
Every Dedekind prime ring is Morita equivalent to a Dedekind domain.

Realization questions for class groups within the class of strictly noncommutative Dedekind prime rings have an easy answer.
If $R$ is a commutative Dedekind domain with class group $G(R)$ and $M$ is a finitely generated projective $R$-module, then $S=\End_R(M)$ is a Dedekind prime ring with $G(S) \cong G(R)$.
However, $S$ is a PI ring, and thus in many aspects close to being commutative.

On the other hand, there exist Dedekind prime rings (and domains) of a very different nature.
For instance, the first Weyl algebra $A_1(K)$ over a field $K$ of characteristic $0$ is a simple Dedekind domain with trivial class group.
The ring $R=\bR[X,Y]/(X^2+Y^2-1)$ is a commutative Dedekind domain with $G(R) \cong \bZ/2\bZ$.
If $\sigma \in \Aut(R)$ denotes the automorphism induced by the rotation by an irrational angle, then the skew Laurent polynomial ring $T = R[x,x^{-1};\sigma]$ is a noncommutative Dedekind domain with $G(T) \cong \bZ/2\bZ$.
Similar constructions exist that show that $\bZ^n$ for $n \in \bN_0$ appears as class group of a noncommutative Dedekind prime ring.
(See \cite[\S7.11 and \S12.7]{mcconnell-robson01} for details.)

The mentioned rings are not Morita equivalent to commutative Dedekind domains and they are all simple rings.
In fact, in \cite{goodearl-stafford05}, a striking dichotomy is established: A Dedekind domain which is finitely generated as an algebra over $\bC$ is commutative or simple.
More generally, if $K$ is a field and a $K$-algebra $R$ is a Dedekind prime ring such that $\dim_K R < \card{K}$ and $R \otimes_K \overline K$ is Noetherian, then $R$ is a PI ring or simple.

In \cite[Problem 54.7]{levy-robson11}, Levy and Robson state it as an open problem to determine which abelian groups can appear as class groups of simple Dedekind prime rings.
The present paper answers this question by showing that any abelian group can be realized as the class group of a simple Dedekind domain.
The main theorem we prove is the following.

\begin{thm} \label{t-main}
  Let $G$ be an abelian group, $K$ a field, and $\kappa$ a cardinal.
  Then there exists a $K$-algebra $T$ which is a noncommutative simple Dedekind domain, $G(T)\cong G$, and each class of $G(T)$ contains at least $\kappa$ maximal right ideals of $T$.
\end{thm}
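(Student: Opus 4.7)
The approach, modeled on the example of $\bR[X,Y]/(X^2+Y^2-1)$ with the irrational rotation recalled above, is to realize $T$ as a skew Laurent polynomial ring $T = D[x,x^{-1};\sigma]$, where $D$ is a commutative Dedekind $K$-algebra with class group $G$ and $\sigma \in \Aut_K(D)$ is a sufficiently generic automorphism.

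The heart of the argument is an equivariant strengthening of Claborn's theorem: I aim to construct a pair $(D,\sigma)$ satisfying
\begin{enumerate}
\item $G(D) \cong G$, each ideal class of $G(D)$ containing at least $\kappa + \aleph_0$ maximal ideals;
\item $\sigma$ preserves each ideal class, i.e.\ $\sigma_*=\id$ on $G(D)$;
\item every nonzero ideal of $D$ has infinite $\sigma$-orbit.
\end{enumerate}
To achieve this I fix a combinatorial model---an index set $B = \bigsqcup_{g \in G} B_g$ with $|B_g| \geq \kappa + \aleph_0$ and a free $\bZ$-action preserving the partition---and then adapt Claborn's transfinite polynomial-extension construction \cite{claborn66,fossum73} to produce $D$ together with a $\bZ$-action whose generator $\sigma$ realizes $B$ as a set of maximal ideals of $D$ in the prescribed classes. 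Reconciling the prescribed class distribution with the prescribed $\bZ$-symmetry at every step of the transfinite construction is the principal technical obstacle.

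With $(D,\sigma)$ in hand, set $T := D[x,x^{-1};\sigma]$. That $T$ is a Noetherian $K$-algebra and a Dedekind prime domain is standard, cf.\ \cite[\S7.11]{mcconnell-robson01}. Simplicity follows from (iii) via the standard criterion for skew Laurent extensions: condition (iii) rules out a nonzero $\sigma^n$-invariant ideal of $D$ for every $n \geq 1$, which in particular forces $\sigma$ to have infinite order. For the class group, the standard Bass--Heller--Swan-type sequence for $K_0$ of a twisted Laurent extension, specialized to the Dedekind setting, yields
\[
G(T) \;\cong\; G(D)\big/(\sigma_*-\id)G(D) \;=\; G(D) \;\cong\; G
\]
by (ii).

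Finally, each infinite $\sigma$-orbit of a maximal ideal $\fm$ of $D$ produces a cyclic simple right $T$-module, and hence a maximal right ideal of $T$ whose image in $G(T)\cong G(D)$ coincides with $[\fm]$. Distinct orbits give non-isomorphic simples, and hence distinct maximal right ideals. Since each class of $G(D)$ contains at least $\kappa + \aleph_0$ maximal ideals partitioned into countable $\sigma$-orbits, each class of $G(T)$ contains at least $\kappa$ maximal right ideals, as claimed.
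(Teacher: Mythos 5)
Your reduction is the same as the paper's in outline: build a commutative Dedekind $K$-algebra $D$ with $G(D)\cong G$, with $\sigma_*=\id$ on $G(D)$, and with no nonzero proper $\sigma$-stable ideal, then pass to $T=D[x,x^{-1};\sigma]$. The endgame is essentially correct: simplicity of $T$ follows from $\sigma$-simplicity of $D$ (your condition (iii) even gives the needed infinite order for free), the Bass--Heller--Swan sequence gives $G(T)\cong G(D)/(\id-\sigma_*)G(D)=G(D)$, and $\fm T$ is a maximal right ideal of $T$ in the class $\beta([\fm])$ (indeed you need not even count orbits: distinct maximal ideals $\fm$ already give distinct maximal right ideals $\fm T$, all in the same class since $\sigma_*=\id$). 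The genuine gap is that the pair $(D,\sigma)$ --- the object carrying the entire difficulty of the theorem --- is never constructed. You state properties (i)--(iii) as an aim and yourself identify ``reconciling the prescribed class distribution with the prescribed $\bZ$-symmetry at every step of the transfinite construction'' as the principal obstacle, but you do not resolve it; ``adapt Claborn equivariantly'' is a hope, not an argument. The sharpest point is condition (iii): it quantifies over \emph{every} nonzero proper ideal of $D$, not merely over the maximal ideals you install in prescribed classes, so it cannot be arranged by bookkeeping on an index set $B$ with a free $\bZ$-action. In any Claborn-type construction the ring acquires primes (and hence ideals) you never explicitly listed, and you must rule out finite $\sigma$-orbits among all of them.

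For comparison, the paper sidesteps a transfinite induction entirely. It first builds a reduced Krull \emph{monoid} $H$ with $\cC(H)\cong G$ and a $\tau\in\Aut(H)$ acting trivially on $\cC(H)$ but with no finite orbits on $\pone(H)$ --- this is the purely combinatorial part and is easy (\cref{t-ex-mon-aut}). The real work is in passing to the semigroup algebra $K[H]$: new height-one primes appear there that are not induced from $H$, namely those surviving in the group algebra $K[\quo(H)]$, and one must show $\tau$ fixes no finite set of these either. This is done by totally ordering $\quo(H)$ so that $\tau$ strictly increases every element $>1$ and comparing normed generators of principal ideals (\cref{p-grp-order} and \cref{p-kg-simple}); this is exactly the step your sketch has no counterpart for. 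Finally the Claborn localization must itself be chosen $\varphi$-stably without inverting the primes one wants to keep (\cref{l-loc}). Until you supply arguments at the level of these steps, the proof does not go through.
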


Simple noncommutative Dedekind domains are canonically obtained either as skew Laurent polynomial rings $R[x,x^{-1};\sigma]$ or as skew polynomial rings $R[x;\delta]$, where $R$ is a commutative Dedekind domain and $\sigma$ is an automorphism, respectively $\delta$ a derivation.
The domains we construct are skew Laurent polynomial rings.
It is well understood how class groups behave under this extension.
In this way, the problem reduces to the construction of a commutative Dedekind domain $R$ with prescribed class group and automorphism $\sigma$ of $R$.
The automorphism $\sigma$ must be such that no proper nonzero ideal $\fa$ is $\sigma$-stable (that is, $\sigma(\fa) = \fa$), but such that the induced automorphism on the class group of $R$ is trivial.

The actual construction is very conceptual in nature and proceeds through the following steps:

\begin{enumerate}
  \item Construct a commutative Krull monoid with class group $G$ and a monoid automorphism $\tau$ of $H$ such that no nonempty proper divisorial ideal of $H$ is $\tau$-stable.
  \item Extend $\tau$ to $K[H]$.
    The semigroup algebra $K[H]$ is a commutative Krull domain with class group isomorphic to $G$.
    The crucial step lies in establishing that no nonzero proper divisorial ideal of $K[H]$ is $\tau$-stable.
  \item A suitable localization $R=S^{-1}K[H]$ is a commutative Dedekind domain, has the same class group as $K[H]$, and $\tau$ extends to $R$.
    This is analogous to the same step in Claborn's proof.
  \item The skew Laurent polynomial ring $T=R[x,x^{-1};\tau]$ is a noncommutative simple Dedekind domain with $G(T) \cong G$.
\end{enumerate}

The methods work in greater generality.
For instance, the field $K$ can be replaced by a commutative Krull domain with suitable automorphism.
The full result is stated in \cref{t-extend}.
\Cref{t-main} is an immediate consequence of \cref{t-ex-mon-aut} and \cref{t-extend}.
The actual construction is mostly commutative in nature.
Before giving the proofs in \cref{s-proofs}, a number of preliminary results are recalled in \cref{s-prelim}.

\begin{remark}
  Let $R$ be a commutative Dedekind domain which is an affine algebra over a field $K$ of characteristic $0$.
  Then the ring of differential operators $\cD(R)$ is a simple Dedekind domain, and the inclusion $R \hookrightarrow \cD(R)$ induces an isomorphism $K_0(R) \cong K_0(\cD(R))$ (see \cite[Chapter 15]{mcconnell-robson01}).
  This induces an isomorphism $G(R) \cong G(\cD(R))$.
  In \cite{rosen73}, Rosen has shown that any finitely generated abelian group is the class group of a commutative Dedekind domain which is affine over a number field.
  This gives a different way of showing that any finitely generated abelian group is the class group of a simple Dedekind domain.
  Using the results from \cite{clark09}, this can be extended to groups of the form $F/H$ where $F$ is free abelian and $H$ is a finitely generated subgroup.
\end{remark}

\section{Background: Krull monoids and skew Laurent polynomial rings} \label{s-prelim}

All rings and modules are unital.
Ring homomorphisms preserve the multiplicative identity.
If $X$ is a subset of a domain, we set $X^\bullet=X \setminus \{0\}$.
A \emph{monoid} is a cancellative semigroup with a neutral element.
Monoid homomorphisms preserve the neutral element.
If $H$ is a monoid, $H^\times$ denotes its group of units.
$H$ is \emph{reduced} if $H^\times = \{1\}$.
A commutative monoid is \emph{torsion-free} if its quotient group is torsion-free.
$\bN$ denotes the set of positive integers and $\bN_0$ the set of all nonnegative integers.
For sets $A$ and $B$, inclusion is denoted by $A \subset B$ and strict inclusion by $A \subsetneq B$.

For a set $P$, let $\cF(P)$ denote the multiplicatively written free abelian monoid with basis $P$.
The quotient group $\quo(\cF(P))$ of $\cF(P)$ is the free abelian group with basis $P$.
Each $a \in \quo(\cF(P))$ has a unique (up to order) representation of the form $a = p_1^{n_1}\cdots p_r^{n_r}$ with $r \in \bN_0$, pairwise distinct $p_1$,~$\ldots\,$,~$p_r \in P$ and $n_1$,~$\ldots\,$,~$n_r \in \bZ^\bullet$.
We define $\supp(a) = \{p_1,\ldots, p_r\}$, $\val_{p_i}(a)=n_i$ for $i \in [1,r]$ and $\val_q(a) = 0$ for all $q \in P \setminus \supp(a)$.

\subsection{Commutative Krull monoids and commutative Krull domains}
We use \cite[Chapter 2]{ghk06} as a reference for commutative Krull monoids and \cite{fossum73} as reference for commutative Krull domains.
For semigroup algebras we refer to \cite{gilmer84}.

Let $(H,\cdot)$ be a commutative monoid and let $\tau \in \Aut(H)$.
Let $\quo(H)$ denote the quotient group of $H$.
The automorphism $\tau$ naturally extends to an automorphism of $\quo(H)$, which we also denote by $\tau$.
For subsets $X$,~$Y \subset \quo(H)$ we define $\cc{Y}{X} = \{\, a \in \quo(H) \mid aX \subset Y \,\}$.
We set $X^{-1} = \cc{H}{X}$ and $X_v = (X^{-1})^{-1}$.
Then $\tau\big({\cc{Y}{X}}\big) = \cc{\tau(Y)}{\tau(X)}$ and hence $\tau(X^{-1}) = \tau(X)^{-1}$ and $\tau(X)_v = \tau( X_v )$.

A subset $\fa \subset \quo(H)$ is a \emph{fractional ideal} of $H$ if $H\fa \subset \fa$ and there exists a $d \in H$ such that $d\fa \subset H$.
If in addition $\fa \subset H$, then $\fa$ is an \emph{ideal} of $H$.
A fractional ideal $\fa$ is \emph{divisorial} if $\fa = \fa_v$.
For all $a \in \quo(H)$, $(aH)_v = aH$ and hence principal fractional ideals are divisorial.
If $\fa$ and $\fb$ are divisorial fractional ideals of $H$, their \emph{divisorial product} is $\fa \cdot_v \fb = (\fa \cdot \fb)_v$.
For principal fractional ideals, the divisorial product coincides with the usual ideal product.

$H$ is a \emph{commutative Krull monoid} if it is $v$-Noetherian (i.e., satisfies the ascending chain condition on divisorial ideals) and completely integrally closed (i.e, whenever $x \in \quo(H)$ is such that there exists a $c \in H$ such that $cx^n \in H$ for all $n \in \bN$, then already $x \in H$).
From now on, let $H$ be a commutative Krull monoid.
If $\fa$ is a nonempty divisorial fractional ideal of $H$, then $\fa$ is invertible with respect to the divisorial product, i.e., $\fa \cdot_v \fa^{-1} = H$.
We denote by $\cF_v(H)^\times$ the group of all nonempty divisorial fractional ideals, and by $\cI_v^*(H)$ the monoid of all nonempty divisorial ideals.
Let $\pone(H)$ be the set of nonempty divisorial prime ideals.
Recall that $\pone(H)$ consists precisely of the prime ideals of height $1$.

With respect to the divisorial product, $\cI_v^*(H)$ is the free abelian monoid with basis $\pone(H)$, and $\cF_v(H)^\times$ is the free abelian group with basis $\pone(H)$.
Hence, every $\fa \in \cF_v(H)^\times$ has a unique representation of the form
\[
\fa = \fp_1^{n_1} \cdot_v \ldots \cdot_v \fp_r^{n_r}
\]
with $r \in \bN_0$, pairwise distinct $\fp_1$,~$\ldots\,$,~$\fp_r \in \pone(H)$ and $n_1$,~$\ldots\,$,~$n_r \in \bZ^\bullet$.
We have $\supp(\fa) = \{\fp_1,\ldots,\fp_r\}$ and $\val_{\fp_i}(\fa)= n_i$ for $i \in [1,r]$.

The principal fractional ideals form a subgroup of $\cF_v(H)^\times$. The \emph{class group} of $H$ is the factor group
\[
\cC(H) = \cF_v(H)^\times / \{\, aH \mid a \in \quo(H) \,\}.
\]
We use additive notation for $\cC(H)$.
If $\fa \in \cF_v(H)^\times$, we write $[\fa]=[\fa]_H$ for its class in $\cC(H)$.
If $\fa$,~$\fb \in \cF_v(H)^\times$, then $[\fa \cdot_v \fb] = [\fa] + [\fb]$.

Any $\tau \in \Aut(H)$ induces an automorphism $\tau_*$ of $\cF_v(H)^\times$ by means of $\tau_*(\fa) = \tau(\fa)$.
Then $\tau_*(\pone(H)) = \pone(H)$, the restriction $\tau_*=\tau_*|_{\cI_v^*(H)}$ is a monoid automorphism of $\cI_v^*(H)$, and $\tau_*(aH)=\tau(a)H$ for all $a \in \quo(H)$.
In particular, $\tau_*$ induces an automorphism of $\cC(H)$, also denoted by $\tau_*$, by means of $\tau_*([\fa])=[\tau_*(\fa)]$.

A \emph{commutative Krull domain} is a domain $D$ such that $D^\bullet$ is a commutative Krull monoid.
We use similar notation for Krull domains as we have introduced for Krull monoids.
If $\quo(D)$ denotes the quotient field of $D$ and $X \subset \quo(D)$, then $\cc{D}{X}$ is always additively closed.
This implies that there exists an isomorphism
\[
\cF_v(D)^\times \to \cF_v(D^\bullet)^\times,\quad \fa \mapsto \fa^\bullet.
\]
Concepts related to divisorial ideals on $D$ correspond to ones on $D^\bullet$.
We make use of this without further mention.
In particular, $\cC(D) \cong \cC(D^\bullet)$ canonically, and we identify.
A commutative domain $D$ is a Dedekind domain if and only if it is a Krull domain with $\dim(D) \le 1$.
Then every nonzero fractional ideal of $D$ is invertible and hence divisorial.
In particular, $\cC(D)$ is the usual ideal class group of the Dedekind domain.

We will construct Krull domains from Krull monoids using semigroup algebras.
The following result is essential.

\begin{prop}[{\cite[Theorem 15.6 and Corollary 16.8]{gilmer84}}] \label{p-krull-semigroup-domain}
  Let $D$ be a commutative domain and $H$ a torsion-free commutative monoid.
  The semigroup algebra $D[H]$ is a Krull domain if and only if $D$ is a Krull domain, $H$ is a Krull monoid, and $H^\times$ satisfies the ascending chain condition on cyclic subgroups.
  In this case $\cC(D[H]) \cong \cC(D) \times \cC(H)$.
\end{prop}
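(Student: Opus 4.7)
The proposition bundles a Krull criterion with a class-group calculation, and I would organize the whole proof around an explicit essential family of valuations on $\quo(D[H])$. For each $\fp \in \pone(D)$ extend the $\fp$-adic valuation coefficient-wise by the Gauss-type rule $\val_\fp(\sum a_h h) = \min\{\val_\fp(a_h) : a_h \neq 0\}$; for each $\fq \in \pone(H)$ set $\val_\fq(\sum a_h h) = \min\{\val_\fq(h) : a_h \neq 0\}$. That these are genuine discrete valuations uses only that $D$ is a domain and that $H$ is torsion-free cancellative, so that leading terms cannot cancel in a product; both extend uniquely to $\quo(D[H])$.

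For the sufficient direction the goal is the Krull representation $D[H] = \bigcap_{\fp \in \pone(D)} V_\fp \cap \bigcap_{\fq \in \pone(H)} V_\fq$ with finite character. The inclusion $\subset$ is immediate from the definitions; for $\supset$, write $f \in \quo(D[H])^\times$ with denominator in $H$ (using that $H$ is a denominator set in $D[H]$) and invoke $\bigcap_\fp D_\fp = D$ together with $\bigcap_\fq H_\fq = H$ to push the coefficients into $D$ and the support into $H$. Finite character reduces to finite character in $D$ applied to the finitely many coefficients of a generator of $fD[H]$, and to finite character in $H$ applied to the finitely many elements of its support; the ACC hypothesis on cyclic subgroups of $H^\times$ enters here to control unit ambiguity in the definition of $\val_\fq$ and to tame $D[H]^\times$. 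For the converse direction, $D$ and $H$ each inherit a Krull representation from $D[H]$ by restricting those essential valuations of $D[H]$ that are trivial on $H$, respectively on $D^\bullet$; the ACC on cyclic subgroups of $H^\times \subset D[H]^\times$ descends from the corresponding ACC that any Krull domain satisfies.

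For the class-group isomorphism, the identification $\pone(D[H]) = \{\fp D[H] : \fp \in \pone(D)\} \sqcup \{\fq^e : \fq \in \pone(H)\}$, where $\fq^e$ is the divisorial prime attached to $\val_\fq$, yields $\cF_v(D[H])^\times \cong \cF_v(D)^\times \oplus \cF_v(H)^\times$. Surjectivity of the induced map $\cC(D) \oplus \cC(H) \to \cC(D[H])$ is then immediate, so the real task is injectivity. The main obstacle is to show that whenever $f \in \quo(D[H])^\times$ satisfies $\val_\fp(f) = \val_\fp(a)$ and $\val_\fq(f) = \val_\fq(h)$ at every $\fp$ and every $\fq$ for suitable $a \in \quo(D)^\times$ and $h \in \quo(H)$, then $f/(ah) \in D[H]^\times$. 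Closing this step rests on the structural identity $D[H]^\times = D^\times \cdot H^\times$, which holds because $H$ is torsion-free and is where the ACC hypothesis on cyclic subgroups of $H^\times$ does its last work; tracking this unit decomposition against the divisor map exhibits the principals of $D[H]$ exactly as the sum of those coming from $\quo(D)^\times$ and from $\quo(H)$.
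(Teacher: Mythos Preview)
The paper does not prove this proposition; it is quoted from Gilmer. But your sketch contains a genuine gap, and the paper's later arguments make the missing ingredient visible.

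Your proposed essential family of valuations, indexed by $\pone(D) \sqcup \pone(H)$ alone, is too small: the intersection $\bigcap_{\fp \in \pone(D)} V_\fp \cap \bigcap_{\fq \in \pone(H)} V_\fq$ is strictly larger than $D[H]$. Take $D = K$ a field and $H = \cF(\{X\})$, so $D[H] = K[X]$; then $\pone(D) = \emptyset$, $\pone(H) = \{XH\}$, and your intersection is the single local ring $K[X]_{(X)}$, not $K[X]$. The step ``write $f \in \quo(D[H])^\times$ with denominator in $H$'' is precisely where this fails: $H$ is indeed a denominator set in $D[H]$, but $H^{-1}D[H] = D[\quo(H)]$ is not the full quotient field, so a general $f$ (for instance $1/(X+1)$ in the example) admits no such expression.

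What is missing is a third family of height-one primes, namely those meeting neither $D^\bullet$ nor $H$. With $K = \quo(D)$ and $G = \quo(H)$ these correspond bijectively to $\pone(K[G])$ via extension and contraction; the paper writes this decomposition out explicitly in the proof of \cref{t-extend-simple}. Consequently your identification of $\pone(D[H])$ with just two pieces is false, and neither the Krull representation nor the surjectivity argument for $\cC(D) \times \cC(H) \to \cC(D[H])$ goes through as written. The class-group isomorphism is nevertheless correct because $K[G]$ has trivial class group (apply the proposition itself in the degenerate case of a field and a group), so every prime in the third family is principal in $D[H]$ and contributes nothing to $\cC(D[H])$; but this must be argued, and the Krull criterion needs those extra valuations from the outset.
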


The isomorphism between $\cC(D) \times \cC(H)$ and $\cC(D[H])$ is obtained naturally by extending representatives of the divisorial ideal classes in $D$, respectively $H$, to $D[H]$.
If $\fa$ is a fractional ideal of $D$, let $\fa[H]=\fa D[H]$ be the extension of $\fa$ to $D[H]$.
It consists of all elements all of whose coefficients are contained in $\fa$.
If $\fb$ is a fractional ideal of $H$, let $D[\fb]= \fb D[H]$ be the extension of $\fb$ to $D[H]$.
It consists of all elements whose support is contained in $\fb$.
By $\fa[\fb]$ we denote the fractional ideal whose support is contained in $\fb$ and whose coefficients are contained in $\fa$.
Then $\fa[\fb] = \fa[H] \cdot D[\fb]$.
Explicitly, the isomorphism of class groups is given by
\[
\cC(D) \times \cC(H) \to \cC(D[H]), \quad
([\fa]_D, [\fb]_H)   \mapsto \big[ \fa[\fb] \big]_{D[H]}.
\]
Let $\sigma \in \Aut(D)$, $\tau \in \Aut(H)$ and let $\varphi \in \Aut(D[H])$ be the extension of $\sigma$ and $\tau$ to $D[H]$ (i.e., $\varphi|_D=\sigma$ and $\varphi|_H = \tau$).
Under the stated isomorphism of the class groups, the automorphism $(\sigma_*, \tau_*)$ on $\cC(D) \times \cC(H)$ corresponds to $\varphi_*$ on $\cC(D[H])$.
From now on we identify $\cC(D[H]) \cong \cC(D) \times \cC(H)$.

\begin{prop}[Nagata's Theorem, {\cite[Corollary 7.2]{fossum73}}] \label{p-nagata}
  Let $D$ be a commutative Krull domain and $S \subset D^\bullet$ a multiplicative subset.
  Then the localization $S^{-1}D$ is a Krull domain and the map $\cI_v^*(D) \to \cI_v^*(S^{-1}D)$, $\fa \mapsto S^{-1}\fa$ induces an epimorphism $\cC(D) \to \cC(S^{-1}D)$ with kernel generated by those $\fp \in \pone(D)$ with $\fp \cap S \ne \emptyset$.
  In particular, if $S$ is generated by prime elements of $D$, then $\cC(D) \cong \cC(S^{-1}D)$.
\end{prop}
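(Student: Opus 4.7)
The plan is to reduce everything to the free-abelian-group structure of $\cF_v(D)^\times$ with basis $\pone(D)$, so that the entire statement becomes a bookkeeping exercise once the height-$1$ primes of $S^{-1}D$ are pinned down.

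First I would verify that $S^{-1}D$ is a commutative Krull domain and identify its height-$1$ primes. Since $D$ is Krull, $D = \bigcap_{\fp \in \pone(D)} D_\fp$ with finite character and each $D_\fp$ a DVR. Prime ideals of $S^{-1}D$ correspond to prime ideals of $D$ disjoint from $S$, and for such $\fp$ one has $(S^{-1}D)_{S^{-1}\fp} = D_\fp$. This gives
\[
S^{-1}D \;=\; \bigcap_{\substack{\fp \in \pone(D) \\ \fp \cap S = \emptyset}} D_\fp,
\]
still with finite character, so $S^{-1}D$ is Krull and $\pone(S^{-1}D) = \{\, S^{-1}\fp \mid \fp \in \pone(D),\ \fp \cap S = \emptyset \,\}$.

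Next I would build the map on divisorial ideal groups. Define $\Phi \colon \cF_v(D)^\times \to \cF_v(S^{-1}D)^\times$ on the basis $\pone(D)$ by sending $\fp$ to $S^{-1}\fp$ if $\fp \cap S = \emptyset$ and to the identity $S^{-1}D$ otherwise, and extend multiplicatively. A routine check, using that divisorial closure is computed via symbolic powers $\fa = \bigcap_{\fp \in \supp(\fa)} \fp^{(\val_\fp(\fa))}$ and that localization commutes with finite intersections, shows $\Phi(\fa) = (S^{-1}\fa)_v$; in particular $\Phi$ agrees on $\cI_v^*(D)$ with the map in the statement. By construction $\Phi$ is surjective on generators, hence surjective, and because $S^{-1}(aD) = a(S^{-1}D)$ for $a \in \quo(D)^\bullet$, principal fractional ideals go to principal fractional ideals. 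Thus $\Phi$ descends to an epimorphism $\overline\Phi \colon \cC(D) \to \cC(S^{-1}D)$.

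Finally I would compute the kernel. By the very definition of $\Phi$, $\ker \Phi$ is the free abelian subgroup of $\cF_v(D)^\times$ generated by $\{\, \fp \in \pone(D) \mid \fp \cap S \neq \emptyset \,\}$, so $\ker \overline\Phi$ is the subgroup of $\cC(D)$ generated by the classes $[\fp]$ of these primes. For the special case where $S$ is generated by prime elements $\{p_i\}_{i \in I}$: if $\fp \in \pone(D)$ meets $S$ then $\fp$ contains some $p_i$, and since $(p_i)$ is a nonzero prime ideal while $\fp$ has height $1$, necessarily $\fp = (p_i)$ is principal. Hence every generator of $\ker \overline\Phi$ is trivial and $\overline\Phi$ is an isomorphism.

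The main obstacle is the first step: establishing that $S^{-1}D$ is Krull and correctly pinning down its height-$1$ primes. Both the ascending chain condition on divisorial ideals and complete integral closure of $S^{-1}D$ must be deduced from those of $D$, and the bijection between $\{\, \fp \in \pone(D) \mid \fp \cap S = \emptyset \,\}$ and $\pone(S^{-1}D)$ requires some care (it rests on the height-$1$ primes of a Krull domain being exactly the nonzero primes of its DVR localizations). Once these foundations are in place, the remaining steps amount to transporting the free-abelian description of $\cF_v$ through the localization map.
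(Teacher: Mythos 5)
The paper does not prove this proposition at all --- it is quoted directly from Fossum \cite[Corollary 7.2]{fossum73} --- so there is no in-paper argument to compare against. Your proof is correct and is essentially the standard argument from that reference: identify $\pone(S^{-1}D)$ with the height-one primes of $D$ avoiding $S$, transport the free abelian structure of $\cF_v(D)^\times$ along $\fp \mapsto S^{-1}\fp$, and note that every principal fractional ideal of $S^{-1}D$ is extended from one of $D$ because $\quo(S^{-1}D)=\quo(D)$ (this last observation is the one step you leave implicit when passing from $\ker\Phi$ to $\ker\overline\Phi$).
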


Let $D$ be a commutative Krull domain and let $S \subset D^\bullet$ be a multiplicative subset.
Then $S^{-1}D$ is a Dedekind domain if and only if $\dim(S^{-1}D) \le 1$.
This is the case if and only if $S \cap \fP \ne \emptyset$ for all $\fP \in \spec(D)$ with $\height(\fP) > 1$.

\subsection{Skew Laurent polynomial rings}

Let $R$ be a ring and $\sigma \in \Aut(R)$.
By $R[x,x^{-1};\sigma]$ we denote the ring of \emph{skew Laurent polynomials}.
$R[x,x^{-1};\sigma]$ consists of polynomial expressions in $x$ and $x^{-1}$ with coefficients in $R$ and subject to $ax=x\sigma(a)$ for all $a \in R$.
Let $\fa$ be an ideal of $R$.
If $\sigma \in \Aut(R)$, then $\fa$ is \emph{$\sigma$-stable} if $\sigma(\fa) = \fa$.
The ring $R$ is \emph{$\sigma$-simple} if $\mathbf 0$ and $R$ are the only $\sigma$-stable ideals of $R$.

\begin{prop}[{\cite[Theorem 1.8.5]{mcconnell-robson01}}]
  Let $R$ be a ring, $\sigma \in \Aut(R)$ and $T = R[x,x^{-1};\sigma]$.
  Then $T$ is a simple ring if and only if $R$ is $\sigma$-simple and no power of $\sigma$ is an inner automorphism.
\end{prop}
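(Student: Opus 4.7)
The plan is to prove the two directions separately. The forward direction is classical: if $T$ is simple, any proper nonzero $\sigma$-stable ideal $\fa$ of $R$ would extend to a proper two-sided ideal $\fa T = T\fa$ of $T$ (with the equality coming from $\fa x = x \sigma(\fa) = x\fa$, and similarly for $x^{-1}$), so $R$ must be $\sigma$-simple; and if some $\sigma^n$ were inner by a unit $u \in R$, a short calculation using $xa = \sigma^{-1}(a)x$ yields $(u x^n)a = a(u x^n)$ for all $a \in R$, producing a nontrivial element of $T$ that commutes with $R$, from which one constructs a proper two-sided ideal of $T$ by a standard argument. The substance of the proof lies in the reverse direction.

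For the reverse direction, assume $R$ is $\sigma$-simple and no power of $\sigma$ is inner, and let $I$ be a nonzero two-sided ideal of $T$. Choose $g \in I \setminus \{0\}$ of minimum support length, and after multiplying by an appropriate power of the unit $x$ assume
\[
  g = r_0 + r_1 x + \dots + r_n x^n, \qquad r_0,\, r_n \ne 0.
\]
If $n = 0$, then $R \cap I$ is a nonzero $\sigma$-stable ideal of $R$ (using $x^{-1} r x = \sigma(r)$), hence equals $R$ by $\sigma$-simplicity, so $1 \in I$. Assume $n \ge 1$ for contradiction. The set of leading coefficients of all normalized minimum-length elements of $I$, together with $0$, is a two-sided $\sigma$-stable ideal of $R$: left or right multiplication of $g$ by any $a \in R$ produces an element of $I$ of length at most $n$, which by minimality is either zero or of length exactly $n$. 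By $\sigma$-simplicity this ideal equals $R$, so one can arrange $r_n = 1$. Then $gx - xg \in I$ has support in $\{1,\dots,n\}$ with vanishing coefficient at $x^n$, so after shifting it has length at most $n-1$; minimality forces it to vanish, yielding $\sigma(r_i) = r_i$ for all $i < n$.

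The key final computation is as follows. For any $a \in R$, both $[a, g] = ag - ga$ and $(a - \sigma^{-n}(a))g$ lie in $I$ and have the same $x^n$-coefficient $a - \sigma^{-n}(a)$, so their difference has support in $\{0,\dots,n-1\}$ and must vanish by minimality. Comparing the $x^i$-coefficients yields
\[
  r_i b = \sigma^{i-n}(b)\, r_i \qquad \text{for every } b \in R \text{ and } 0 \le i < n.
\]
At $i = 0$, this shows $r_0 R = R r_0$ is a two-sided ideal of $R$, and it is $\sigma$-stable since $\sigma(r_0) = r_0$; by $\sigma$-simplicity it equals $R$, so $r_0$ is a unit of $R$. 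Then the $i = 0$ relation rearranges to $\sigma^{-n}(b) = r_0 b r_0^{-1}$, exhibiting $\sigma^n$ as an inner automorphism and contradicting the hypothesis. The principal obstacle is precisely this final combined use of the two hypotheses: identifying the corrector $(a - \sigma^{-n}(a))g$ that reduces the support of $[a, g]$ is the essential combinatorial move, and $\sigma$-simplicity is what promotes the a priori arbitrary trailing coefficient $r_0$ to a unit so that the non-innerness hypothesis can finally be triggered.
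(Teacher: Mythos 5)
The paper offers no proof of this proposition; it is quoted verbatim from McConnell--Robson, so the only meaningful comparison is with the standard textbook argument. Your proof of the sufficiency direction (the one the paper actually relies on, via $\sigma$-simplicity plus infinite order for commutative $R$) is that standard argument, and it is correct and complete: minimal span, promotion of the leading coefficient to $1$ via a $\sigma$-stable ideal of leading coefficients, $\sigma$-invariance of the remaining coefficients from $gx-xg=0$, and the relation $r_ib=\sigma^{i-n}(b)r_i$ forcing $r_0$ to be a unit and $\sigma^{-n}$ to be inner. (One typographical slip: in $gx-xg$ the coefficient that vanishes is the one at $x^{n+1}$, not $x^{n}$; that is precisely why the support drops to $\{1,\dots,n\}$.) The $\sigma$-simplicity half of the necessity direction is also fine.

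The genuine gap is in the other half of necessity, which you dismiss as ``a standard argument.'' Having found $z=ux^{n}$ centralizing $R$, the obvious candidate ideal $T(z-1)$ is a \emph{two-sided} ideal only if $(z-1)x\in T(z-1)$, and computing $xzx^{-1}=\sigma^{-1}(u)u^{-1}z$ shows this requires $\sigma(u)=u$. In general one only gets $\sigma(u)=cu$ for a central unit $c$ (from $\operatorname{inn}_{\sigma(u)}=\operatorname{inn}_u$), and then the two-sided ideal generated by $z-1$ contains nonzero elements of $R$ such as $c-1$; since $R$ is $\sigma$-simple, any nonzero element of $R$ generates all of $T$ as a two-sided ideal, so the construction can collapse. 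You therefore owe either an argument that the implementing unit can be normalized to be $\sigma$-invariant, or a different construction of a proper ideal (this is where the real content of that implication sits in McConnell--Robson). Since the paper only needs the commutative case, where every inner automorphism is the identity and the issue evaporates, the gap does not affect the paper's use of the proposition, but as a proof of the stated biconditional it is incomplete.
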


If $R$ is a commutative ring, the identity is the only inner automorphism of $R$.
Hence the second condition in the previous theorem reduces to $\sigma$ having infinite order.
If $R$ is a $\sigma$-simple commutative domain which is not a field, then $\sigma$ has infinite order.
For suppose $\sigma^n = \id$ for some $n \in \bN$.
Let $\mathbf 0 \ne \fa \subsetneq R$ be an ideal of $R$.
Then $\fa\sigma(\fa) \cdots \sigma^{n-1}(\fa) \ne \mathbf 0$ is a proper ideal of $R$ which is $\sigma$-stable.

Combining our observations so far with \cite[Theorem 7.11.2]{mcconnell-robson01}, we obtain the following.
\begin{prop}[{\cite[Theorem 7.11.2]{mcconnell-robson01}}] \label{p-nc-dedekind}
  Let $R$ be a commutative Dedekind domain which is not a field, let $\sigma \in \Aut(R)$, and let $T=R[x,x^{-1};\sigma]$.
  The following conditions are equivalent:
  \begin{equivenumerate}
    \item $T$ is simple.
    \item $T$ is hereditary.
    \item The Krull dimension of $T$ is $1$.
    \item $T$ is a noncommutative Dedekind domain.
    \item $R$ is $\sigma$-simple.
  \end{equivenumerate}
\end{prop}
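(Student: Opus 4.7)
The proof will be a short assembly of results already on the table: the preceding simplicity criterion (identifying when $R[x,x^{-1};\sigma]$ is simple in the general setting), the observation about commutative rings having only the trivial inner automorphism, and the cited McConnell--Robson theorem. The main work is showing (a) $\Leftrightarrow$ (e); the remaining equivalences among (b), (c), (d) and these are the content of \cite[Theorem 7.11.2]{mcconnell-robson01}, which handles skew Laurent polynomial extensions of commutative Dedekind domains directly.

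For (a) $\Rightarrow$ (e), I would argue contrapositively: if $\mathbf 0 \ne \fa \subsetneq R$ is a $\sigma$-stable ideal, then $\fa T$ is a nonzero proper two-sided ideal of $T$ (it is proper because it lies in the $R$-submodule generated by monomials with coefficients in $\fa$, and $\sigma$-stability is exactly what is needed for $\fa T = T \fa$), contradicting simplicity. For (e) $\Rightarrow$ (a), the preceding proposition says $T$ is simple iff $R$ is $\sigma$-simple and no power of $\sigma$ is an inner automorphism of $R$. Since $R$ is commutative, the only inner automorphism is $\id$, so the second condition becomes: $\sigma$ has infinite order. This is automatic: by the discussion immediately preceding the proposition, a $\sigma$-simple commutative domain that is not a field must have $\sigma$ of infinite order, for if $\sigma^n = \id$ then any nonzero proper ideal $\fa$ yields $\fa \sigma(\fa)\cdots \sigma^{n-1}(\fa)$, a nonzero proper $\sigma$-stable ideal (nonzero because $R$ is a domain and $\sigma$ permutes $R^\bullet$, proper because any Dedekind domain that is not a field has a nonzero proper ideal of the form $\fa$ and unique factorization prevents the product from becoming the unit ideal).

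For the remaining equivalences, I would invoke \cite[Theorem 7.11.2]{mcconnell-robson01}, which asserts exactly that for $R$ a commutative Dedekind domain (not a field) and $\sigma \in \Aut(R)$, each of hereditariness, Krull dimension one, and being a noncommutative Dedekind domain, is equivalent to $\sigma$-simplicity of $R$ (or, in the formulation of that theorem, to simplicity of $T$). Since $T$ is in all cases a Noetherian prime ring (standard: skew Laurent extensions of commutative Noetherian domains by an automorphism are Noetherian prime), being a Dedekind prime ring reduces to being a hereditary maximal order, which is precisely what that theorem delivers.

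The only real obstacle is making sure that the passage (a) $\Leftrightarrow$ (e) goes through even though $R$ may be a field in some degenerate reading; the hypothesis that $R$ is not a field is used precisely to force $\sigma$ of infinite order from $\sigma$-simplicity, and once this is secured the rest is citation. Everything else is bookkeeping that follows from the preceding two propositions.
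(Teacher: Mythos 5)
Your proposal is correct and follows essentially the same route as the paper, which likewise derives (a)$\Leftrightarrow$(e) from the preceding simplicity criterion together with the observations that inner automorphisms of a commutative ring are trivial and that $\sigma$-simplicity of a commutative domain that is not a field forces $\sigma$ to have infinite order (via the $\sigma$-stable ideal $\fa\sigma(\fa)\cdots\sigma^{n-1}(\fa)$), and then cites \cite[Theorem 7.11.2]{mcconnell-robson01} for the remaining equivalences. The only cosmetic difference is that properness of that product ideal follows immediately from its containment in $\fa$, so no appeal to unique factorization is needed.
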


The behavior of the Grothendieck group $K_0$ under skew Laurent polynomial extensions is well understood.
We denote classes in $K_0$ using angle brackets.
We recall the result from \cite[\S12.5]{mcconnell-robson01}.
Let $R$ be a ring and $\sigma \in \Aut(R)$.
Let $M$ be a right $R$-module.
Define a new right $R$-module $M^\sigma$ as follows:
As a set, $M^\sigma$ is in bijection with $M$, where the element of $M^\sigma$ corresponding to $m \in M$ is written as $m^\sigma$.
The abelian group structure on $M^\sigma$ is the one induced from $M$, i.e., $m^\sigma + n^\sigma = (m+n)^\sigma$.
The right $R$-module structure on $M^\sigma$ is defined by $(m^\sigma) r = (m \sigma^{-1}(r))^\sigma$.
A similar construction works for left modules: To a left module $M$ associate ${}^\sigma M$ with $r ({}^\sigma m) = {}^\sigma (\sigma(r) m)$.
In particular, ${}^\sigma R$ with the usual right $R$-module structure is an $R$-bimodule, and $M \otimes_R ({}^\sigma R) \cong M^\sigma$ as right $R$-modules.
Now, $\sigma$ induces an automorphism $\sigma_*$ of $K_0(R)$ by means of $\kcls{M} \mapsto \kcls{M^\sigma}$.

Let $T = R[x,x^{-1};\sigma]$.
For a finitely generated projective right $R$-module $M$, $M \otimes_R T$ is a finitely generated projective right $T$-module.
This induces a homomorphism $\alpha\colon K_0(R) \to K_0(T)$.
A ring $R$ is right regular if each finitely generated right $R$-module has a projective resolution of finite length.

\begin{prop}[{\cite[Theorem 12.5.6]{mcconnell-robson01}}] \label{p-k0-ext}
  Let $R$ be a right regular, right Noetherian ring.
  Let $\sigma \in \Aut(R)$ and $T = R[x,x^{-1};\sigma]$.
  Then the sequence
  \[
  \xymatrix@C=1.5cm{
    K_0(R) \ar[r]^{\id-\sigma_*} & K_0(R) \ar[r]^{\alpha} & K_0(T) \ar[r] & \mathbf 0
  }
  \]
  is exact.
\end{prop}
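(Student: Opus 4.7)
The plan is to verify exactness at each of the three positions in turn.

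\textbf{Composition is zero.} For a finitely generated projective right $R$-module $M$, I exhibit a right $T$-module isomorphism $M^\sigma \otimes_R T \to M \otimes_R T$ by $m^\sigma \otimes t \mapsto m \otimes xt$, with inverse $m \otimes t \mapsto m^\sigma \otimes x^{-1}t$. Both maps are $R$-balanced using $(m^\sigma) r = (m\sigma^{-1}(r))^\sigma$ and the commutation $ax = x\sigma(a)$ in $T$ (equivalently $ax^{-1} = x^{-1}\sigma^{-1}(a)$): for instance, $(m^\sigma) r \otimes t = (m\sigma^{-1}(r))^\sigma \otimes t$ maps to $m\sigma^{-1}(r) \otimes xt = m \otimes \sigma^{-1}(r) xt = m \otimes xrt$, matching the image of $m^\sigma \otimes rt$. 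This yields $\alpha(\sigma_*\kcls{M}) = \alpha(\kcls{M})$ for every class, so $\alpha \circ (\id - \sigma_*) = 0$.

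\textbf{Reduction to $G_0$.} A Hilbert-basis type argument shows that $T$ is right Noetherian, and right regularity of $R$ transfers to $T$. Hence the canonical maps $K_0(R) \to G_0(R)$ and $K_0(T) \to G_0(T)$, where $G_0$ denotes the Grothendieck group of finitely generated right modules, are isomorphisms. Since $T = \bigoplus_{i \in \bZ} R x^i$ is free (hence flat) as a left $R$-module, the functor $- \otimes_R T$ is exact, and under the $G_0$-identifications $\alpha$ sends $\kcls{N}$ to $\kcls{N \otimes_R T}$.

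\textbf{Inverse map.} To prove both surjectivity of $\alpha$ and the inclusion $\ker \alpha \subset \im(\id - \sigma_*)$ at once, I construct $\beta \colon G_0(T) \to G_0(R)/\im(\id - \sigma_*)$. Given a finitely generated right $T$-module $M$, pick a finitely generated $R$-submodule $M_0 \subset M$ with $M_0 T = M$, and set $M_n = \sum_{|i| \le n} M_0 x^i$. Each $M_n$ is finitely generated over $R$, $M = \bigcup_n M_n$, and the crucial observation is that for any $R$-submodule $N$ the right $R$-module $Nx$ is isomorphic to $N^\sigma$ via $n \mapsto (nx$-coset$)$, using $(nx)r = n\sigma^{-1}(r)x$. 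Right multiplication by $x$ therefore identifies successive filtration quotients up to a $\sigma$-twist, and telescoping modulo $\im(\id - \sigma_*)$ produces a stable class $\beta(\kcls{M})$.

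The main obstacle is the last step: verifying that $\beta$ is well-defined (independent of the choice of $M_0$ and of the filtration), additive on short exact sequences of finitely generated $T$-modules, and a two-sided inverse to the map $G_0(R)/\im(\id - \sigma_*) \to G_0(T)$ induced by $\alpha$. These checks rest on comparing different filtrations and on the key identity $\kcls{Nx} = \sigma_*\kcls{N}$ in $G_0(R)$; the regularity of $R$ is essential in reducing short-exact-sequence bookkeeping to finite projective resolutions. The complete argument along these lines is carried out in \cite[Theorem 12.5.6]{mcconnell-robson01}.
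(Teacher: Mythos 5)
This proposition is quoted background: the paper gives no proof of it at all, citing \cite[Theorem~12.5.6]{mcconnell-robson01} and using only the statement. So there is no in-paper argument to compare yours against; what can be judged is whether your sketch stands on its own. The portions you actually carry out are correct. The isomorphism $M^\sigma\otimes_R T\to M\otimes_R T$, $m^\sigma\otimes t\mapsto m\otimes xt$, is well defined and right $T$-linear exactly as you compute (using $\sigma^{-1}(r)x=xr$ in $T$), and it gives $\alpha\circ(\id-\sigma_*)=0$. The reduction to $G_0$ via the Cartan map for right regular right Noetherian rings, the flatness of $T$ over $R$, and the identity $\kcls{Nx}=\sigma_*\kcls{N}$ are all sound. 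One small correction: regularity of $R$ (and of $T$) is what makes $K_0\cong G_0$ on both ends; it is not really what makes the short-exact-sequence bookkeeping for $\beta$ work --- that part is pure $G_0$-theory and needs only right Noetherianness.

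The genuine gap is that the entire content of the theorem --- exactness at the middle term and surjectivity of $\alpha$ --- lives precisely in the step you defer: that $\beta$ is independent of the choice of $M_0$, additive on short exact sequences of finitely generated $T$-modules, and inverse to the map induced by $\alpha$, and you close by referring these verifications back to \cite[Theorem~12.5.6]{mcconnell-robson01}, i.e.\ to the statement being proved. As written the proposal is therefore an annotated citation rather than a proof. If you want to complete it, note also that your symmetric filtration $M_n=\sum_{\abs{i}\le n}M_0x^i$ makes the telescoping ambiguous (both $M_0x^{n+1}$ and $M_0x^{-(n+1)}$ enter $M_{n+1}/M_n$, twisted by $\sigma$ and $\sigma^{-1}$ respectively); the cleaner route is the one-sided filtration inside $R[x;\sigma]$ followed by localization at the Ore set $\{x^n\}$ and dévissage on $x$-torsion modules, which is essentially how the cited source organizes the argument. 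Since the paper itself treats this as a black box, deferring is defensible --- but then the honest form of your write-up is a citation, not a proof.
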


Let $R$ be a Dedekind prime ring.
Each finitely generated projective right $R$-module $P$ has a uniform dimension $\udim_R (P) \in \bN_0$.
The uniform dimension is additive on direct sums and induces an epimorphism $\udim_R \colon K_0(R) \to \bZ$.
The \emph{(ideal) class group} of $R$ is $G(R) = \ker(\udim_R\colon K_0(R) \to \bZ)$.
The epimorphism $\udim_R$ splits, hence $K_0(R) \cong G(R) \times \bZ$.
Let $G'$ denote the set of stable isomorphism classes of essential right ideals of $R$.
$G'$ can be endowed with the structure of an abelian group by setting $[\fa] + [\fb] = [\fc]$ if and only if $\fa \oplus \fb \cong R \oplus \fc$.
Then $G'$ is isomorphic to $G(R)$ by means of $G' \to G(R), [\fa] \mapsto \kcls{\fa} - \kcls{R}$, and we identify.
When we say that a class $g \in G(R)$ contains an essential right ideal $\fa$ of $R$, we mean $g=[\fa]=\langle \fa \rangle - \langle R \rangle$.

If $R$ is commutative, $G(R)$ is indeed isomorphic the usual ideal class group.
The isomorphism $\cC(R) \to G(R)$ is given by $[\fa] \mapsto \kcls{\fa} - \kcls{R}$.
If $\sigma$ is an automorphism of $R$, we note that under the stated isomorphism of $\cC(R)$ and $G(R)$, the induced automorphism $\sigma_* \colon \cC(R) \to \cC(R)$ corresponds to $\sigma_* \colon G(R) \to G(R)$.
This is so, because for an ideal $\fa \subset R$, we have $\fa^\sigma \cong \sigma(\fa)$ as right $R$-modules, via $a^\sigma \mapsto \sigma(a)$.

Let $R$ be a commutative Dedekind domain.
Since $\udim_T(P \otimes_R T) = \udim_R(P)$ for all finitely generated projective $R$-modules $P$, we obtain a commutative diagram
\[
  \xymatrix@C=1.5cm{
    K_0(R) \ar[r]^{\id-\sigma_*} \ar[d] & K_0(R) \ar[r]^{\alpha} \ar[d] & K_0(T) \ar[r] \ar[d] & \mathbf 0 \\
    G(R) \times \bZ \ar[r]^{(\id-\sigma_*,\id)} & G(R) \times \bZ \ar[r]^{(\alpha_0,\id)} & G(T) \times \bZ \ar[r] & \mathbf 0 \\
  }
\]
with the vertical arrows being isomorphisms induced by the splitting of $\udim_R$, $\udim_R$, and $\udim_T$ respectively.
Here $\alpha_0$ is the map induced on $G(R) \to G(T)$ by $\alpha$.
Using the isomorphism $\cC(R) \cong G(R)$, we obtain a short exact sequence
\[
  \xymatrix@C=1.5cm{
    \cC(R) \ar[r]^{\id-\sigma_*} & \cC(R) \ar[r]^{\beta} & G(T) \ar[r] & \mathbf 0.
  }
\]
Here, $\beta([\fa]_R) = \kcls{\fa \otimes_R T} - \kcls{T} = [\fa \otimes_R T]_T \in G(T)$.

\begin{remark}
  \begin{enumerate}
  \item
    If $R$ and $S$ are Morita equivalent Dedekind prime rings, the Morita equivalence induces an isomorphism $K_0(R) \cong K_0(S)$, which restricts to an isomorphism $G(R) \cong G(S)$.
  \item
  Let $R$ be a Dedekind prime ring.
  If $\fa$ and $\fb$ are stably isomorphic essential right ideals of $R$, that is $[\fa]=[\fb]$ in $G(R)$, then, in general, it does not follow that $\fa \cong \fb$.
  However, if $\udim_R R \ge 2$, then $[\fa]=[\fb]$ does imply $\fa \cong \fb$ (\cite[Corollary 35.6]{levy-robson11}).
  Note that $S=M_n(R)$, with $n \ge 2$ is a Dedekind prime ring with $G(R) \cong G(S)$ and $\udim_S S =n \ge 2$.
\end{enumerate}
\end{remark}

\section{Construction and main results} \label{s-proofs}

\begin{lemma}
  Let $H$ be a commutative Krull monoid, $\tau \in \Aut(H)$, and $\fa \in \cF_v^\times(H)$.
  Then $\tau(\fa) = \fa$ if and only if $\tau(\fa) \subset \fa$.
\end{lemma}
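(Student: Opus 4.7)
The forward implication is trivial. For the converse, the plan is to translate everything into the combinatorics of the free abelian group $\cF_v(H)^\times$, viewed as having basis $\pone(H)$, on which $\tau$ acts by permuting basis elements.

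The first preliminary step is the observation that for $\fa$, $\fb \in \cF_v^\times(H)$, one has $\fb \subset \fa$ if and only if $\val_\fp(\fb) \geq \val_\fp(\fa)$ for every $\fp \in \pone(H)$. Indeed, using that $H$ is divisorial and $\fa \cdot_v \fa^{-1} = H$, the containment $\fa^{-1} \cdot \fb \subset \fa^{-1} \cdot \fa \subset (\fa^{-1}\cdot \fa)_v = H$ shows that $\fb \subset \fa$ forces $\fa^{-1} \cdot_v \fb \subset H$, and the converse is similar; so $\fb \subset \fa$ is equivalent to $\fa^{-1} \cdot_v \fb$ being an integral divisorial ideal, which in turn is equivalent to all exponents in its prime factorization being nonnegative.

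Now let $\pi$ be the permutation of $\pone(H)$ induced by $\tau$. Since $\tau$ permutes the basis of the free abelian group $\cF_v(H)^\times$, one has $\val_{\pi(\fp)}(\tau(\fa)) = \val_\fp(\fa)$ for every $\fp$. Set $f(\fp) := \val_\fp(\fa)$; the hypothesis $\tau(\fa) \subset \fa$ then reads $f(\fp) \geq f(\pi(\fp))$ for all $\fp \in \pone(H)$, i.e., $f$ is non-increasing along every $\pi$-orbit.

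The remaining step — which is the only real content of the argument — is to upgrade this to constancy of $f$ on each orbit, using that $f$ has finite support (because $\fa$ has a finite prime factorization). On a finite orbit this is immediate: after one full cycle $f$ returns to its initial value, so the non-increasing chain must be constant. On an infinite orbit, $f$ vanishes outside a finite window; picking indices $N_1 < N_2$ with $f(\pi^{N_1}(\fp_0)) = f(\pi^{N_2}(\fp_0)) = 0$ sandwiches the non-increasing sequence between two zeros, forcing it to be identically zero on the whole orbit. In either case, $f = f \circ \pi$, whence $\val_\fp(\tau(\fa)) = \val_{\pi^{-1}(\fp)}(\fa) = \val_\fp(\fa)$ for every $\fp$; by the uniqueness of prime factorizations in $\cF_v(H)^\times$ this gives $\tau(\fa) = \fa$. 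I expect no genuine obstacle here — just the need to handle the infinite-orbit case with a little care, since finite support is not quite the same as the orbits being finite.
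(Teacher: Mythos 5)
Your proof is correct, but it takes a genuinely different route from the paper's. Both arguments rest on the same structural facts — $\cF_v(H)^\times$ is free abelian on $\pone(H)$, $\cI_v^*(H)$ is the corresponding free monoid, and $\tau$ acts by permuting the basis — but they diverge at the key step. The paper writes $\tau(\fa) = \fa \cdot_v \fb$ with $\fb \in \cI_v^*(H)$ and compares the \emph{sum of all exponents} on both sides: this sum is a $\tau$-invariant (as $\tau$ permutes $\pone(H)$), while multiplying by a nontrivial integral $\fb$ strictly increases it; hence $\fb = H$ in one line, with no case analysis. You instead reformulate $\tau(\fa) \subset \fa$ as the pointwise inequality $f \ge f \circ \pi$ for the exponent function $f = \val_{(-)}(\fa)$ and argue orbit by orbit, splitting into finite orbits (a non-increasing cyclic sequence is constant) and infinite orbits (a non-increasing sequence with finite support is sandwiched between zeros, hence zero). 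Your handling of the infinite-orbit case is exactly right and is the point where naive monotonicity alone would not suffice. What your version buys is slightly finer information: it shows that a $\tau$-stable $\fa$ has $\supp(\fa)$ equal to a union of \emph{finite} $\tau_*$-orbits, which foreshadows the equivalence with condition \ref{l-simple:orbits} of \cref{l-simple}. What the paper's version buys is brevity: a single global invariant replaces the orbit decomposition and the finite/infinite dichotomy, and it does not even need the explicit valuation-theoretic criterion for containment of divisorial ideals (though, as you note, essentially the same computation with $\fa^{-1}$ and $v$-closures is hidden in the step producing $\fb$).
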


\begin{proof}
  Suppose that $\tau(\fa) \subset \fa$.
  Then there exists $\fb \in \cI_v^*(H)$ such that $\tau(\fa) = \fa \cdot_v \fb$.
  Let $\fa = \fp_1^{n_1} \cdot_v \ldots \cdot_v \fp_r^{n_r}$ with $r \in \bN_0$, $\fp_1$,~$\ldots\,$,~$\fp_r \in \pone(H)$ and $n_1$,~$\ldots\,$,~$n_r \in \bZ^\bullet$.
  Similarly, let $\fb = \fq_1^{m_1} \cdot_v \ldots \cdot_v \fq_s^{m_s}$ with $s \in \bN_0$, $\fq_1$,~$\ldots\,$,~$\fq_s \in \pone(H)$ and $m_1$,~$\ldots\,$,~$m_s \in \bN$.
  Then
  \[
  \tau(\fa) = \tau(\fp_1)^{n_1} \cdot_v \ldots \cdot_v \tau(\fp_r)^{n_r} = \fp_1^{n_1} \cdot_v \ldots \cdot_v \fp_r^{n_r} \cdot_v \fq_1^{m_1} \cdot_v \ldots \cdot_v \fq_s^{m_s}.
  \]
  Then necessarily $n_1+\cdots + n_r = n_1 + \cdots + n_r + m_1+ \cdots + m_s$.
  Hence $s=0$ and $\fb=H$.
  Thus $\tau(\fa) = \fa$.
\end{proof}

Of course, the claim of the previous lemma does not hold for ideals which are not divisorial.
For a counterexample, let $K$ be a field, $H=K[...,X_{-1},X_0,X_1,\ldots]^\bullet$, $\tau(X_i) = X_{i+1}$ with $\tau|_K=\id$, and $\fa = (X_0,X_1,\ldots)$.

\begin{lemma} \label{l-simple}
  Let $H$ be a commutative Krull monoid and let $\tau \in \Aut(H)$.
  The following statements are equivalent:
  \begin{equivenumerate}
    \item\label{l-simple:fracideal} $\tau(\fa) \ne \fa$ for all $\fa \in \cF_v(H)^\times \setminus \{H\}$.
    \item\label{l-simple:ideal} $\tau(\fa) \ne \fa$ for all $\fa \in \cI_v^*(H) \setminus \{H\}$.
    \item\label{l-simple:sqf} $\tau(\fa) \ne \fa$ for all squarefree $\fa \in \cI_v^*(H) \setminus \{H\}$.
    \item\label{l-simple:prime} For all finite $\emptyset \ne X \subset \pone(H)$, it holds that $\tau_*(X) = \{\, \tau(\fp) \mid \fp \in X \,\} \ne X$.
    \item\label{l-simple:orbits} The induced permutation $\tau_*$ of $\pone(H)$ has no finite orbits.
  \end{equivenumerate}
  If $\cC(H)=\mathbf 0$, then any of the above conditions is equivalent to
  \begin{equivenumerate}
    \setcounter{enumi}{5}
    \item\label{l-simple:principal} For all $a \in H\setminus H^\times$ and $\varepsilon \in H^\times$, $\tau(a) \ne \varepsilon a$.
  \end{equivenumerate}

  In particular, if these equivalent conditions are satisfied and $\emptyset \ne A \subset \quo(H)$ is finite with $A \nsubset H^\times$, then $\tau(A) \ne A$.
\end{lemma}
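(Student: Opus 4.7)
The plan is to prove the equivalence of (a)--(e) via the cycle (a) $\Rightarrow$ (b) $\Rightarrow$ (c) $\Rightarrow$ (d) $\Rightarrow$ (e) $\Rightarrow$ (a), leveraging the structure of $\cF_v(H)^\times$ as a free abelian group on $\pone(H)$ (with $\cI_v^*(H)$ as its submonoid of ``nonnegative'' elements). The first two implications are immediate from the set-theoretic inclusions of the respective classes of divisorial ideals.

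For (c) $\Rightarrow$ (d), given a finite nonempty $X \subset \pone(H)$, I would consider the squarefree ideal $\fa = \prod_{\fp \in X} \fp \in \cI_v^*(H) \setminus \{H\}$ (divisorial product); if $\tau_*(X) = X$, uniqueness of the prime factorization in the free monoid $\cI_v^*(H)$ yields $\tau(\fa) = \fa$, contradicting (c). For (d) $\Rightarrow$ (e), a finite $\tau_*$-orbit on $\pone(H)$ is itself a finite, nonempty, $\tau_*$-stable subset. Conversely, for (e) $\Rightarrow$ (a), if $\tau(\fa) = \fa$ for some $\fa \in \cF_v(H)^\times \setminus \{H\}$, then uniqueness of factorization in the free abelian group forces $\tau_*$ to permute the finite nonempty set $\supp(\fa)$, and each of its elements lies in a finite $\tau_*$-orbit.

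Under the assumption $\cC(H) = \mathbf 0$, condition (f) is essentially a restatement of (b): every element of $\cI_v^*(H) \setminus \{H\}$ has the form $aH$ with $a \in H \setminus H^\times$, and $\tau(aH) = aH$ is equivalent to $\tau(a) \in aH^\times$, that is, $\tau(a) = \varepsilon a$ for some $\varepsilon \in H^\times$; both quantifications range over the same set modulo this identification.

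For the final ``in particular'' assertion, I would argue by contradiction, reducing to (d). Assume $\tau(A) = A$. The set $B = \{\, aH : a \in A \,\} \setminus \{H\}$ is finite, nonempty (precisely because $A \not\subset H^\times$), and $\tau_*$-stable, since $\tau(A) = A$ and $\tau$ maps units to units. Then $Y = \bigcup_{\fb \in B} \supp(\fb)$ is a finite, nonempty, $\tau_*$-stable subset of $\pone(H)$, contradicting (d). The only pitfall worth flagging is the temptation to work with $\prod_{a \in A} a$ directly: this product can land in $H^\times$ even when $A \not\subset H^\times$ (e.g., if $A$ contains both $a$ and $a^{-1}$), so one must instead pass to the \emph{set} of principal ideals generated by the elements of $A$ and collect their supports; this is the only genuinely delicate step in the proof.
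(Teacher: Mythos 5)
Your proposal is correct and follows essentially the same route as the paper: the trivial implications (a)$\Rightarrow$(b)$\Rightarrow$(c), the squarefree ideal $\bigl(\prod_{\fp\in X}\fp\bigr)_v$ for (c)$\Rightarrow$(d), the observation that a $\tau_*$-stable support traps a finite orbit for (e)$\Rightarrow$(a), the identification of (f) with (b) via principality of divisorial ideals, and the reduction of the final claim to (d) by collecting the supports of the principal fractional ideals $aH$, $a\in A$. Your remark about why one must pass to supports rather than to $\prod_{a\in A}a$ is a sensible (if optional) clarification; the paper's proof implicitly does the same thing.
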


\begin{proof}
  \ref*{l-simple:fracideal}${}\Rightarrow{}$\ref*{l-simple:ideal}${}\Rightarrow{}$\ref*{l-simple:sqf}: Trivial.

  \ref*{l-simple:sqf}${}\Rightarrow{}$\ref*{l-simple:prime}:
  By contradiction. Suppose that $\emptyset \ne X \subset \pone(H)$ is such that $\tau_*(X)=X$.
  Set $\fa = ( \prod_{\fp \in X} \fp )_v$.
  Then $\fa \in \cI_v^*(H) \setminus \{H\}$, $\fa$ is squarefree, and $\tau(\fa) = \tau\big( ( \prod_{\fp \in X} \fp )_v \big ) = ( \prod_{\fp \in X} \tau(\fp) )_v = \fa$.
  This contradicts \ref*{l-simple:sqf}.

  \ref*{l-simple:prime}${}\Rightarrow{}$\ref*{l-simple:orbits}: Clear.

  \ref*{l-simple:orbits}${}\Rightarrow{}$\ref*{l-simple:fracideal}:
  Let $\fa \in \cF_v^*(H) \setminus \{H\}$.
  Then $\fa = \fp_1^{n_1}\cdot_v \ldots \cdot_v \fp_r^{n_r}$ with $r \in \bN$, $\fp_1$, $\ldots\,$,~$\fp_r \in \pone(H)$ and $n_1$, $\ldots\,$,~$n_r \in \bZ^\bullet$.
  Now $\tau(\fa) = \tau(\fp_1)^{n_1} \cdot_v \ldots \cdot_v \tau(\fp_r)^{n_r}$ is the unique representation of $\tau(\fa)$ as divisorial product of divisorial prime ideals.
  Suppose that $\tau(\fa)=\fa$.
  Then $\tau^n(\fa) = \fa$ for all $n \in \bZ$.
  Hence the $\tau_*$-orbit of $\fp_1$ is contained in $\supp(\fa) = \{ \fp_1,\ldots,\fp_r \}$.
  This contradicts \ref*{l-simple:orbits}.

  \ref*{l-simple:ideal}${}\Leftrightarrow{}$\ref*{l-simple:principal}
  Suppose that $\cC(H)$ is trivial.
  Then every divisorial ideal is principal.
  The claim follows since $aH = bH$ for $a$,~$b \in H$ if and only if there exists $\varepsilon \in H^\times$ with $a = b \varepsilon$.

  We still have to show the final implication and do so by contradiction.
  Let $\emptyset \ne A=\{a_1, \ldots, a_n\} \subset \quo(H)$ with $A \nsubset H^\times$.
  Since $A \nsubset H^\times$, the set $X = \bigcup_{i=1}^n \supp(a_i H) \subset \pone(H)$ is nonempty.
  Thus $\tau_*(X) \ne X$ by \ref*{l-simple:prime}, and hence $\tau(A) \ne A$.
\end{proof}

\begin{defi}
  Let $H$ be a commutative Krull monoid and $\tau \in \Aut(H)$.
  $H$ is called \emph{$\tau$-$v$-simple} if the equivalent conditions of \cref{l-simple} are satisfied.
  If $D$ is a commutative Krull domain and $\sigma \in \Aut(D)$, then $D$ is called \emph{$\sigma$-$v$-simple} if the commutative Krull monoid $D^\bullet$ is $(\sigma|_{D^\bullet})$-$v$-simple.
\end{defi}

A lemma analogous to \cref{l-simple} holds for commutative Krull domains.
Since there is a correspondence between divisorial ideals of $D$ and divisorial ideals of $D^\bullet$, $D$ is $\sigma$-$v$-simple if and only if $\sigma(\fa) \ne \fa$ for all divisorial ideals $\fa$ of $D$, etc.

We first construct a reduced commutative Krull monoid $H$ with given class group $G$, as well as an automorphism of $H$ such that $H$ is $\tau$-$v$-simple, and such that $\tau_*$ acts trivially on the class group.

\begin{thm} \label{t-ex-mon-aut}
  Let $G$ be an abelian group and $\kappa$ an infinite cardinal.
  Then there exists a reduced commutative Krull monoid $H$ and an automorphism $\tau$ of $H$ such that $\cC(H) \cong G$, $\tau_*=\id_{\cC(H)}$, and $H$ is $\tau$-$v$-simple.
  Each class of $\cC(H)$ contains $\kappa$ nonempty divisorial prime ideals.
\end{thm}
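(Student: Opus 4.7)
The plan is to build $H$ as a submonoid of a free abelian monoid carrying a prescribed $G$-grading on its prime generators, and to induce $\tau$ from a permutation of these generators that preserves the grading and has only infinite orbits. Concretely, let $P := G \times \kappa \times \bZ$ and $F := \cF(P)$, and define a monoid homomorphism $\pi \colon F \to G$ (with $G$ viewed in its additive group structure) by $\pi(g,\alpha,n) := g$ on generators. Put $H := \pi^{-1}(0_G) \subset F$, which is a reduced saturated submonoid of $F$.

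I claim that the inclusion $H \hookrightarrow F$ is a divisor theory, so that by standard theory (a ``$T$-block monoid''-style construction; cf.~\cite[\S 2.5]{ghk06}) $H$ is a reduced commutative Krull monoid, its divisorial primes are $\{\fp_p\}_{p \in P}$ with $\fp_p = \{a \in H : \val_p(a) \ge 1\}$, and $\cC(H) \cong \quo(F)/\quo(H) \cong G$ via $[\fp_p] \mapsto \pi(p)$. Indeed, each $p \in P$ is a gcd in $F$ of two elements of $H$: pick distinct $q_1, q_2$ in the fiber $\pi^{-1}(-\pi(p)) \cap P$ (possible because this fiber is infinite), so that $pq_1, pq_2 \in H$ and $\gcd_F(pq_1,pq_2) = p$; and the saturation $\quo(H) = \ker(\pi \colon \quo(F) \to G)$ follows by a similar common-denominator argument. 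Since $\pi^{-1}(g) \cap P = \{g\} \times \kappa \times \bZ$ has cardinality $\kappa$ for every $g \in G$, each class of $\cC(H)$ contains exactly $\kappa$ divisorial prime ideals, as required.

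For the automorphism, define $\sigma \colon P \to P$ by $\sigma(g,\alpha,n) := (g,\alpha,n+1)$. Every $\sigma$-orbit is an infinite copy of $\bZ$, and by construction $\pi \circ \sigma = \pi$. Extending $\sigma$ multiplicatively to a monoid automorphism of $F$ and restricting to $H$ (which it stabilizes because it commutes with $\pi$) yields the desired $\tau \in \Aut(H)$. The induced permutation of $\pone(H)$ agrees with $\sigma$ under the bijection $\pone(H) \leftrightarrow P$ and thus has no finite orbits, so $H$ is $\tau$-$v$-simple by \cref{l-simple}. Moreover $\tau_*[\fp_p] = [\fp_{\sigma(p)}]$ corresponds to $\pi(\sigma(p)) = \pi(p)$ in $G$, hence $\tau_* = \id_{\cC(H)}$. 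The main technical point is the divisor-theory assertion in the second paragraph; everything else is a formal consequence. The construction is rigged so that the extra $\bZ$-factor in $P$ simultaneously supplies $\kappa$ primes per class, provides a fixed-point-free shift with only infinite orbits, and does so fiberwise over $G$ to kill the action on the class group.
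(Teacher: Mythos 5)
Your proposal is correct and is essentially the paper's own construction: the paper takes $H=\psi^{-1}(0_G)\subset\cF(\Omega\times G)$ with $|\Omega|=\kappa$ and a fixed-point-free-on-finite-sets permutation of $\Omega$ obtained from the bijection $\Omega\cong\Omega\times\bZ$ and the shift, which is exactly your $P=G\times\kappa\times\bZ$ with the shift on the $\bZ$-factor. The only difference is that you verify the divisor-theory and class-group claims by hand (gcd and common-denominator arguments), where the paper simply cites \cite[Proposition 2.5.1.4]{ghk06}.
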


\begin{proof}
Let $(G,+)$ be an additive abelian group, and let $\Omega$ be a set of cardinality $\kappa$.
Let $\tau_0 \colon \Omega \to \Omega$ be a permutation such that $\tau_0(X) \ne X$ for all finite $\emptyset \ne X \subset \Omega$.
(Such a permutation always exists. $\Omega$ is in bijection with $\Omega \times \bZ$, and the map $\Omega \times \bZ \to \Omega \times \bZ$, $(x,n) \mapsto (x,n+1)$ has the desired property.)

Let $D = \cF(\Omega \times G)$ be the free abelian monoid with basis $\Omega \times G$.
Then $\tau_0$ induces an automorphism $\tau \in \Aut(D)$ with the property that $\tau((x,g)) = (\tau_0(x),g)$ for all $x \in \Omega$ and $g \in G$.
Let $\psi\colon D \to G$ be the unique homomorphism such that $\psi((x,g)) = g$ for all $x \in \Omega$ and $g \in G$.
Set $H = \psi^{-1}(0_G)$.
Since $\psi(\tau((x,g))) = g = \psi((x,g))$, we find $\tau(H) \subset H$.
Hence $\tau$ restricts to an automorphism of $H$, again denoted by $\tau$.

We claim that $(H,\cdot)$ is a reduced commutative Krull monoid with class group $G$, that $H$ is $\tau$-$v$-simple, and that the induced automorphism $\tau_*$ of $\cC(H)$ is the identity.
Moreover, each class of $\cC(H)$ contains $\card{\Omega}$ divisorial prime ideals.
That $H$ is a reduced commutative Krull monoid with class group $G$ follows from \cite[Proposition 2.5.1.4]{ghk06}.
It also follows that the inclusion $\iota\colon H \hookrightarrow D$ is a divisor theory.
Hence, $\pone(H) = \{\, (x,g)D \cap H \mid x \in \Omega,\; g \in G \,\}$.
By construction, $\tau$ does not fix any finite nonempty subset of $\pone(H)$, and hence $H$ is $\tau$-$v$-simple.
On the other hand, $\psi(\tau(x,g)) = \psi((x,g)) = g$, so that $\tau_*$ acts trivially on $\cC(H)$.
\end{proof}

\begin{remark}
  Let $H$ be a reduced commutative Krull monoid.
  We note that it is easy to determine $\Aut(H)$.
  Let $\tau \in \Aut(H)$.
  Then $\tau$ induces an automorphism $\tau_*$ of $\cI_v^*(H)$ and further an automorphism of $\cC(H)$, that we denote by $\tau_*$ again.
  For $g \in \cC(H)$, denote by $\pone(H)(g) = \{\, \fp \in \pone(H) \mid [\fp]=g \,\}$ the nonempty divisorial prime ideals in class $g$.
  For all $\fp \in \pone(H)$, it holds that $[\tau_*(\fp)] = \tau_*([\fp])$.
  In particular, if $g$ and $h \in \cC(H)$ lie in the same $\tau_*$-orbit, then $\card{\pone(H)(g)} = \card{\pone(H)(h)}$.
  The automorphism $\tau$ is uniquely determined by the induced $\tau_* \in \Aut(\cC(H))$ as well as the family of bijections $\pone(H)(g) \to \pone(H)(\tau_*(g))$ induced by $\tau_*$.

  Conversely, suppose that $\alpha$ is an automorphism of $\cC(H)$ such that for all $g \in \cC(H)$, $\card{\pone(H)(g)}=\card{\pone(H)(\alpha(g))}$.
  For each class $g \in \cC(H)$, let $\beta_g \colon \pone(H)(g) \to \pone(H)(\alpha(g))$ be a bijection.
  Then there exists a (uniquely determined) automorphism $\tau \in \Aut(H)$ with $\tau_*(\fp) = \beta_g(\fp)$ for all $g \in \cC(H)$ and $\fp \in \pone(H)(g)$.

  In particular, we obtain the following strengthening of \cref{t-ex-mon-aut}:
  If $H$ is a commutative Krull monoid such that each class contains either zero or infinitely many divisorial prime ideals, then there exists a $\tau \in \Aut(H)$ such that $H$ is $\tau$-$v$-simple and $\tau_*$ is the identity on $\cC(H)$.
\end{remark}

Let $P$ be a set, $\cF(P)$ the (multiplicatively written) free abelian monoid with basis $P$, and $G=\quo(\cF(P))$ the free abelian group with basis $P$.
Since every element of $G\cong \bZ^{(P)}$ has finite support, any total order on $P$ induces a total order on $G$ by means of the lexicographical order and the natural total order on $\bZ$.
Explicitly, for $a \in G \setminus \{1\}$ we define $a \ge 1$ if and only if $\val_p(a) \ge 0$ for $p = \max\supp(a)$.
With respect to any such order, $G$ is a totally ordered group.
If $(G,\cdot,\le)$ is a totally ordered group, we set $G_{>1} = \{\, a \in G \mid a > 1 \,\}$ and $G_{\ge 1} = \{\, a \in G \mid a \ge 1 \,\}$.

\begin{lemma} \label{l-ex-order}
  \begin{enumerate}
    \item\label{l-ex-order:set} Let $P$ be a set and let $\tau\colon P \to P$ be a permutation having no finite orbits.
      Then there exists a total order $\le$ on $P$ such that $\tau$ is order-preserving with respect to $\le$.
      Moreover, $\tau(x) > x$ for all $x \in P$.
    \item\label{l-ex-order:group}
      Let $(P,\le_P)$ be a totally ordered set.
      Let $\tau\colon P \to P$ be a permutation such that $\tau$ is order-preserving and $\tau(x) >_P x$ for all $x \in P$.
      Let $G = \quo(\cF(P))$,  $\overline\tau \in \Aut(G)$ with $\overline\tau|_P=\tau$, and let $\le$ be the total order on $G$ induced by $\le_P$.
      Then $\overline\tau(a) > a$ for all $a \in G_{>1}$.
      In particular, $\overline\tau$ is order-preserving and $\overline\tau(G_{>1}) \subset G_{>1}$.
  \end{enumerate}
\end{lemma}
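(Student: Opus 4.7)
For (1), my plan is to exploit that since $\tau$ has no finite orbits, every orbit is countably infinite, and after choosing a base point $x_\omega$ in each orbit $\omega$, the map $\bZ \to \omega$, $n \mapsto \tau^n(x_\omega)$, is a bijection that carries the usual ordering on $\bZ$ to an orbit-internal total order on which $\tau$ acts as the shift by $+1$. Using the axiom of choice, pick any total order $\le_\cO$ on the set of orbits and order $P$ lexicographically: $x < y$ iff either $\omega_x <_\cO \omega_y$, or $\omega_x = \omega_y$ and the integer index of $x$ is strictly less than that of $y$. Because $\tau$ preserves orbits and shifts every integer index by one, this order is $\tau$-invariant with $\tau(x) > x$ for all $x$.

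For (2), my plan is to reduce the three conclusions to the single inequality $\overline{\tau}(a) > a$ for every $a \in G_{>1}$. Indeed, if this holds, then for $a < a'$ the quotient $a' a^{-1}$ lies in $G_{>1}$, so $\overline{\tau}(a') \overline{\tau}(a)^{-1} = \overline{\tau}(a' a^{-1}) > 1$ yields $\overline{\tau}(a') > \overline{\tau}(a)$; the inclusion $\overline{\tau}(G_{>1}) \subset G_{>1}$ is the special case of comparing against $1$.

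To prove the inequality I would work with the unique normal form $a = p_1^{n_1} \cdots p_r^{n_r}$ in which $p_1 <_P \cdots <_P p_r$, $r \ge 1$, and $n_i \in \bZ^\bullet$. The hypothesis $a > 1$ and the definition of the lexicographic order on $G$ force $n_r > 0$. Setting $b = \overline{\tau}(a) a^{-1}$, the support of $b$ is contained in $\{\tau(p_1),\ldots,\tau(p_r),p_1,\ldots,p_r\}$. Because $\tau$ preserves $\le_P$ and $\tau(p_r) >_P p_r$, the element $\tau(p_r)$ strictly exceeds every $p_i$ and weakly dominates every $\tau(p_i)$, making it the only candidate for $\max \supp(b)$. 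Moreover $\tau(p_r) \notin \supp(a)$ since $\tau(p_r) >_P p_r = \max\supp(a)$, hence $\val_{\tau(p_r)}(b) = \val_{\tau(p_r)}(\overline{\tau}(a)) = \val_{p_r}(a) = n_r > 0$. So $\max\supp(b) = \tau(p_r)$ with positive top exponent, $b > 1$, and thus $\overline{\tau}(a) = ab > a$.

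I do not anticipate a serious obstacle. Part (1) is a routine orbit-decomposition argument whose only subtlety is the use of the axiom of choice to linearly order $\cO$. Part (2) is a direct valuation computation once one isolates $b = \overline{\tau}(a) a^{-1}$ as the right quantity to analyze, the key observation being that the $\tau$-image of the top element of $\supp(a)$ produces a new, strictly larger, top element of $\supp(b)$ carrying the same positive exponent $n_r$.
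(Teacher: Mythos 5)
Your proposal is correct and follows essentially the same route as the paper: part (1) via the $\bZ$-shift order on each infinite orbit plus an arbitrary total order on the set of orbits, and part (2) via the normal form of $a$ and the observation that $\tau$ applied to the top element of $\supp(a)$ yields a strictly larger element carrying the positive top exponent. Your explicit valuation computation on $b=\overline\tau(a)a^{-1}$ merely fills in the step the paper summarizes as ``from the way we defined the total order on $G$.''
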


\begin{proof}
  \ref*{l-ex-order:set}
  For $x \in P$, let $x^\tau = \{\, \tau^n(x) \mid n \in \bZ \,\}$ be its $\tau$-orbit.
  Since $x^\tau$ is infinite, it is naturally totally ordered by $\tau^m(x) \le \tau^n(x)$ if and only if $m \le n$.
  Fix an arbitrary total order on the set of all $\tau$-orbits.
  For $x$, $y \in P$, define $x \le y$ if and only if either $x^\tau < y^\tau$, or if $x^\tau=y^\tau$ and there exists an $n \in \bN_0$ such that $y = \tau^n(x)$.
  Then $\le$ is a total order on $P$, and $\tau$ is order-preserving with respect to this order.
  Moreover, $\tau(x) > x$ for all $x \in P$.

  \ref*{l-ex-order:group}
  As already observed, $(G, \cdot, \le)$ is a totally ordered group.
  Let $a \in G$ with $a > 1$.
  We show $\overline\tau(a) > a$.
  We have $a = p_1^{n_1}\cdots p_r^{n_r}$ with $r \in \bN$, pairwise distinct $p_1$,~$\ldots\,$,~$p_r \in P$ and $n_1$,~$\ldots\,$,~$n_r \in \bZ^\bullet$.
  Using the total order on $P$, we may assume $p_1 > \cdots > p_r$.
  Since $a > 1$, we have $n_1 > 0$.
  Now, $\overline\tau(a) = \overline\tau(p_1)^{n_1} \cdots \overline\tau(p_r)^{n_r}$.
  Since $\overline\tau$ is order-preserving with respect to $\le_P$, we have $\overline\tau(p_1) > \cdots > \overline\tau(p_r)$ and moreover $\overline\tau(p_1) > p_1$.
  From the way we defined the total order on $G$, it follows that $\overline\tau(a) > a > 1$.
  In particular, $\overline\tau$ is order-preserving and $\overline\tau(G_{>1}) \subset G_{>1}$.
\end{proof}

If $\cF(P)$ is a free abelian monoid and $\overline\tau$ is an automorphism of $\cF(P)$ which has no finite orbits on $P$, then \cref{l-ex-order} implies that the quotient group $\quo(\cF(P))$ admits the structure of a totally ordered group with respect to which $\overline\tau$ is order-preserving, etc.
The following is a strengthening of this result to quotient groups of reduced commutative Krull monoids.

\begin{prop} \label{p-grp-order}
 Let $H$ be a reduced commutative Krull monoid and let $\tau \in \Aut(H)$ be such that $H$ is $\tau$-$v$-simple.
 Let $G$ denote the quotient group of $H$, and denote the extension of $\tau$ to $\Aut(G)$ again by $\tau$.
 Then there exists an order $\le$ on $G$ such that $(G,\cdot,\le)$ is a totally ordered group, $H \subset G_{\ge 1}$, and $\tau(a) > a$ for all $a \in G_{> 1}$.
 In particular, $\tau$ is order-preserving on $G$ and $\tau(G_{>1}) \subset G_{>1}$.
\end{prop}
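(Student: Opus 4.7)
The plan is to pull back the desired order on $G$ from a total order on a free abelian group, where \cref{l-ex-order} applies directly. Set $P := \pone(H)$ and identify the free abelian group $\cF_v(H)^\times$ on $P$ (under the divisorial product) with $F := \quo(\cF(P))$. The key gadget is the homomorphism
\[
  \partial\colon G \to F, \quad a \mapsto \prod_{\fp \in P} \fp^{\val_\fp(aH)},
\]
which one first checks is injective: if $\partial(a) = \partial(b)$, then $aH = bH$ as divisorial fractional ideals, so $ab^{-1} \in H^\times = \{1\}$ since $H$ is reduced. Moreover, $\partial$ intertwines $\tau$ with the automorphism $\overline{\tau_*}$ of $F$ extending the permutation $\tau_*$ of $P$; this is an immediate consequence of the identity $\tau_*(aH) = \tau(a)H$ recorded in the excerpt, together with the uniqueness of divisorial factorization.

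Next, the $\tau$-$v$-simple hypothesis enters through condition \ref*{l-simple:orbits} of \cref{l-simple}: the permutation $\tau_*$ has no finite orbits on $P$. Then \cref{l-ex-order}\ref*{l-ex-order:set} supplies a total order $\le_P$ on $P$ for which $\tau_*$ is order-preserving with $\tau_*(\fp) >_P \fp$ for every $\fp \in P$, and the induced lexicographic order $\le_F$ on $F$ satisfies the conclusion of \cref{l-ex-order}\ref*{l-ex-order:group}: $(F, \cdot, \le_F)$ is a totally ordered group and $\overline{\tau_*}(y) >_F y$ for every $y \in F_{>1}$.

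The final step is to transport the order through $\partial$ by declaring $a \le_G b$ if and only if $\partial(a) \le_F \partial(b)$. Injectivity of $\partial$ makes this a total order, and compatibility with multiplication is automatic because $\partial$ is a homomorphism. The containment $H \subset G_{\ge 1}$ follows since $\partial(H) \subset \cF(P) \subset F_{\ge 1}$, and the strict inequality $\tau(a) >_G a$ for $a \in G_{>1}$ is obtained by applying the corresponding inequality in $F$ to $\partial(a) \in F_{>1}$ and rewriting it via $\partial \circ \tau = \overline{\tau_*} \circ \partial$. Order-preservation of $\tau$ on $G$ and $\tau(G_{>1}) \subset G_{>1}$ are then formal consequences. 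I do not expect a genuine obstacle in this plan; the only nontrivial verification is the intertwining $\partial \circ \tau = \overline{\tau_*} \circ \partial$, and once that is in place the proposition is essentially its free counterpart already handled in \cref{l-ex-order}.
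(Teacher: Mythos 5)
Your proof is correct and follows essentially the same route as the paper: embed $G$ into the free abelian group on (a set in bijection with) $\pone(H)$, use the no-finite-orbits characterization of $\tau$-$v$-simplicity to order that basis via \cref{l-ex-order}, and pull back the resulting lexicographic order. The only cosmetic difference is that you build the embedding explicitly from the valuations $\val_\fp(aH)$ and check injectivity from $H^\times=\{1\}$, whereas the paper invokes the divisor theory $\iota\colon H\hookrightarrow\cF(P)$ of \cite[Theorem 2.4.7.3]{ghk06} and verifies that $\tau$ transports to the induced automorphism of $\cF(P)$ --- these are the same map.
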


\begin{proof}
  Since $H$ is a commutative Krull monoid, it has a divisor theory.
  Because $H$ is reduced, this divisor theory can be taken to be an inclusion.
  Thus, explicitly, there exists a set $P$ such that $\iota\colon H \hookrightarrow F=\cF(P)$, $G \subset \quo(F)$, and such that the inclusion $\iota$ induces a monoid isomorphism
  \[
  \iota^*\colon F \to \cI_v^*(H),\quad  a \mapsto aF \cap H.
  \]
  Moreover, $(\iota^*)^{-1}(aH) = a$ for all $a \in H$, and $\iota^*|_P \colon P \to \pone(H)$ is a bijection.
  (See \cite[Theorem 2.4.7.3]{ghk06}.)
  Recall that $\tau$ induces a monoid automorphism $\tau_*\colon \cI_v^*(H) \to \cI_v^*(H)$ and that $\tau_*(aH) = \tau(a)H$ for all $a \in H$.

  We first show that $\tau$ extends to an automorphism of $F$.
  Through $\iota^*$, we obtain an automorphism $\overline\tau = (\iota^*)^{-1} \circ \tau_* \circ \iota^* \in \Aut(F)$.
  But we also have $H \subset F$ via the inclusion $\iota$.
  We claim that in fact $\overline\tau|_H = \tau$.
  Let $a \in H$.
  Then
  \[
  \overline\tau(a) = (\iota^*)^{-1} \circ \tau_* \circ \iota^*(a)
           =  (\iota^*)^{-1} \circ \tau_*(aH) = (\iota^*)^{-1}(\tau(a)H) = \tau(a).
  \]
  Moreover, $\overline\tau$ extends to an automorphism of $\quo(F)$, again denoted by $\overline\tau$, and then also $\overline\tau|_G=\tau$ on $G$.

  The automorphism $\overline\tau$ induces a permutation on $P$, and \subref{l-simple:orbits} implies that $\overline\tau$ does not have any finite orbits on $P$.
  Thus, \subref{l-ex-order:set} implies that there exists a total order $\le_P$ on $P$ such that $\overline\tau|_P \colon P \to P$ is order-preserving and $\overline\tau(p) > p$ for all $p \in P$.
  Let $\le$ denote the order on $\quo(F)$ induced by $\le_P$.
  Then $(\quo(F), \cdot, \le)$ is a totally ordered group.
  By \subref{l-ex-order:group}, $\overline\tau(a) > a$ for all $a \in \quo(F)_{>1}$.
  Denote the restriction of $\le$ to $G$ again by $\le$.
  Then $(G,\cdot,\le)$ is a totally ordered group, and $\tau(a) > a$ for all $a \in G_{>1}$.
  Clearly $H \subset G_{\ge 1}.$
\end{proof}

Let $(G,\cdot,\le)$ be a totally ordered group and $K$  a field.
The group algebra $K[G]$ is naturally $G$-graded.
Using the total order on $G$, it is easy to check that $K[G]$ is a domain.
Every unit of $K[G]$ is homogeneous, that is, $K[G]^\times = \{\, \lambda g \mid \lambda \in K^\times,\; g \in G \,\}$.
It follows that every nonzero principal ideal $\fa$ of $K[G]$ has a uniquely determined generator of the form $1 + f$ with $\supp(f) \subset G_{>1}$.
We call $1+f$ the \emph{normed generator} of $\fa$.

\begin{prop} \label{p-kg-simple}
  Let $H$ be a reduced commutative Krull monoid, and let $\tau \in \Aut(H)$ be such that $H$ is $\tau$-$v$-simple.
  Let $G$ denote the quotient group of $H$ and let $K$ be a field.
  If $\varphi \in \Aut(K[G])$ with $\varphi|_H = \tau$ and $\varphi(K) \subset K$, then $K[G]$ is $\varphi$-$v$-simple.
\end{prop}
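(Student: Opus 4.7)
The plan is to prove condition \ref*{l-simple:principal} of \cref{l-simple}, namely that $\varphi(f) \neq \varepsilon f$ for all $f \in K[G]^\bullet \setminus K[G]^\times$ and $\varepsilon \in K[G]^\times$. To justify invoking \ref*{l-simple:principal} I will first observe that $\cC(K[G])$ is trivial. Viewing the group $G$ as a (degenerate) commutative Krull monoid whose only nonempty fractional ideal is $G$ itself, we have $\cC(G) = \mathbf 0$; combined with $\cC(K) = \mathbf 0$, \cref{p-krull-semigroup-domain} yields $\cC(K[G]) \cong \cC(K) \times \cC(G) = \mathbf 0$.

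Next I set up the order and clarify the structure of $\varphi$. By \cref{p-grp-order}, $G$ carries a total order with respect to which $(G,\cdot,\le)$ is a totally ordered group, $H \subset G_{\ge 1}$, and $\tau(x) > x$ for every $x \in G_{>1}$; by taking inverses it follows that $\tau$ has no nontrivial fixed point in $G$. Since $\varphi(K) \subset K$ and $\varphi|_H = \tau$, for any $g = h_1 h_2^{-1} \in G$ we have $\varphi(g) = \tau(h_1)\tau(h_2)^{-1}$, so $\varphi|_G$ agrees with the unique extension of $\tau$ to $\Aut(G)$, which I continue to denote by $\tau$. In particular $\varphi$ maps $K[G]^\times = K^\times G$ into itself.

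The remaining step is a direct leading-term argument. Suppose for contradiction that $\varphi(f) = \varepsilon f$ for some non-unit $f$ and some unit $\varepsilon = \lambda g$ with $\lambda \in K^\times$ and $g \in G$. Write $f = \sum_{i=1}^n \mu_i g_i$ with $\mu_i \in K^\times$ and $g_1 < \cdots < g_n$; since $f \notin K[G]^\times$, we have $n \ge 2$. Because $\tau$ is order-preserving and left translation by $g$ preserves the order of $G$, the expressions
\[
  \varphi(f) = \sum_{i=1}^n \varphi(\mu_i)\,\tau(g_i) \quad\text{and}\quad \varepsilon f = \sum_{i=1}^n (\lambda\mu_i)\,(g g_i)
\]
are both written with strictly increasing supports. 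Uniqueness of representation in $K[G]$ forces $\tau(g_i) = g g_i$ for every $i$. Taking the ratio of two such equations (using $n \ge 2$) gives $\tau(g_2 g_1^{-1}) = g_2 g_1^{-1}$, contradicting the fact that $\tau$ has no nontrivial fixed point in $G$.

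The main obstacle that the plan has to navigate is not any deep computation but rather the correct set-up: identifying that $\cC(K[G]) = \mathbf 0$ (so that the principal-ideal criterion \ref*{l-simple:principal} is available), verifying that the ambient automorphism $\varphi$ restricts to an order-preserving automorphism of $G$ with no nontrivial fixed points, and then reading off the contradiction from comparing leading and trailing terms.
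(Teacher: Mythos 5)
Your proof is correct, and its skeleton matches the paper's: both establish that $K[G]$ is a Krull domain with trivial class group via \cref{p-krull-semigroup-domain}, reduce $\varphi$-$v$-simplicity to a statement about nonzero proper principal ideals, and lean on the total order on $G$ supplied by \cref{p-grp-order}. The endgame differs. The paper works with the unique \emph{normed generator} $1+f$, $\supp(f)\subset G_{>1}$, of each such ideal, notes that $\varphi$ carries normed generators to normed generators, and concludes from the final clause of \cref{l-simple} that $\tau(\supp(f))\neq\supp(f)$; this invokes the full ``no finite invariant subset of $\quo(H)$'' strength of $\tau$-$v$-simplicity. You instead verify condition \subref{l-simple:principal} directly for an arbitrary generator: listing the supports of $\varphi(f)$ and $\varepsilon f$ in increasing order forces $\tau(g_i)=gg_i$ for all $i$, whence $g_2g_1^{-1}$ is a nontrivial fixed point of $\tau$ in $G$, contradicting $\tau(a)>a$ on $G_{>1}$. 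So your argument needs only the fixed-point-freeness of $\tau$ on $G$ rather than the stronger set-level statement, at the price of carrying the unit $\varepsilon=\lambda g$ through the support-matching computation; both routes are valid. One point you pass over quickly: applying \cref{p-krull-semigroup-domain} to $K[G]$ requires knowing that $G$ is torsion-free and satisfies the ascending chain condition on cyclic subgroups, which holds because $G$ embeds into a free abelian group via the divisor theory of $H$ --- the paper records this explicitly.
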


\begin{proof}
  Denote the extension of $\tau$ to $G$ again by $\tau$.
  Note that $\varphi|_G = \tau$.
  By \cref{p-grp-order}, there exists an order $\le$ on $G$ such that $(G,\cdot,\le)$ is a totally ordered group and $\tau(G_{> 1}) \subset G_{> 1}$.
  Since $G$ is a subgroup of a free abelian group, it satisfies the ascending chain condition on cyclic subgroups.
  Hence, $K[G]$ is a commutative Krull domain with trivial class group by \cref{p-krull-semigroup-domain}.
  Thus, every divisorial ideal of $K[G]$ is principal.
  To show that $K[G]$ is $\varphi$-$v$-simple it therefore suffices to show $\varphi(\fa) \ne \fa$ for all principal ideals $\fa$ of $K[G]$ with $\fa \notin \{\mathbf 0, K[G]\}$.
  Let $\fa$ be such an ideal.
  Let $f \in K[G]$ with $\supp(f) \subset G_{>1}$ be such that $1+f$ is the normed generator of $\fa$.
  Since $\fa \ne K[G]$, we have $\supp(f) \ne \emptyset$.
  Now $\varphi(1+f) = \varphi(1) + \varphi(f) = 1 + \varphi(f)$.
  Moreover, $\supp(\varphi(f)) = \tau(\supp(f)) \subset G_{>1}$.
  Hence $1+\varphi(f)$ is the normed generator of $\varphi(\fa)$.
  Since $H$ is $\tau$-$v$-simple, $\tau(\supp(f)) \ne \supp(f)$ by \cref{l-simple}.
  Thus $1+\varphi(f) \ne 1 + f$, and $\varphi(\fa) \ne \fa$.
\end{proof}

\begin{thm} \label{t-extend-simple}
  Let $D$ be a commutative Krull domain and let $\sigma \in \Aut(D)$ be such that $D$ is $\sigma$-$v$-simple.
  Let $H$ be a reduced commutative Krull monoid and let $\tau \in \Aut(H)$ be such that $H$ is $\tau$-$v$-simple.
  Let $\varphi\in \Aut(D[H])$ denote the extension of $\sigma$ and $\tau$ to $D[H]$, i.e., $\varphi|_D=\sigma$ and $\varphi|_H=\tau$.
  Then $D[H]$ is $\varphi$-$v$-simple.
\end{thm}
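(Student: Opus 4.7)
The plan is to reduce the question to the unique factorization of divisorial fractional ideals of $D[H]$ and to analyze how $\varphi$ acts on its height-one primes. First, I would record that $D[H]$ is itself a commutative Krull domain: because $H$ is a reduced Krull monoid it admits a divisor theory into a free abelian monoid $\cF(P)$, so $\quo(H)$ embeds into a free abelian group and is therefore torsion-free, while $H^\times = \{1\}$ trivially satisfies the ascending chain condition on cyclic subgroups. Hence \cref{p-krull-semigroup-domain} applies.

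The key structural input I would invoke --- standard in the theory of Krull semigroup rings, cf.\ \cite[\S16]{gilmer84} --- is that $\pone(D[H])$ is the disjoint union of the two families
\[
\{\,\fp[H] \mid \fp \in \pone(D)\,\}\quad\text{and}\quad\{\,D[\fq] \mid \fq \in \pone(H)\,\},
\]
with $\fp[H] = \fp D[H]$ and $D[\fq] = \fq D[H]$. Since $\cF_v(D[H])^\times$ is the free abelian group on $\pone(D[H])$, every $\fA \in \cF_v(D[H])^\times$ admits a unique factorization
\[
\fA = \fp_1[H]^{a_1}\cdot_v\cdots\cdot_v\fp_r[H]^{a_r}\cdot_v D[\fq_1]^{b_1}\cdot_v\cdots\cdot_v D[\fq_s]^{b_s}
\]
with pairwise distinct $\fp_i\in\pone(D)$, pairwise distinct $\fq_j\in\pone(H)$, and $a_i$, $b_j\in\bZ^\bullet$. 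A direct computation from $\varphi|_D=\sigma$ and $\varphi|_H=\tau$ gives $\varphi(\fp[H]) = \sigma(\fp)[H]$ and $\varphi(D[\fq]) = D[\tau(\fq)]$.

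With these ingredients the argument is quick: if $\varphi(\fA) = \fA$, comparing the factorizations of $\fA$ and $\varphi(\fA)$ and invoking uniqueness forces $\{\fp_1,\ldots,\fp_r\}$ to be $\sigma_*$-stable and $\{\fq_1,\ldots,\fq_s\}$ to be $\tau_*$-stable. The $\sigma$-$v$-simplicity of $D$ together with condition \subref{l-simple:prime} of \cref{l-simple} then forces $r=0$, and analogously $s=0$. Hence $\fA = D[H]$ and $D[H]$ is $\varphi$-$v$-simple. The one genuine obstacle is the classification of $\pone(D[H])$; once that standard fact is granted, the remainder is bookkeeping with freeness of $\cF_v(D[H])^\times$ and the prime-orbit reformulation of $v$-simplicity.
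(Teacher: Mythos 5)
There is a genuine gap: your classification of $\pone(D[H])$ is incorrect. The height-one primes of $D[H]$ do \emph{not} all arise from $\pone(D)$ and $\pone(H)$; there is a third family, namely those $\fp \in \pone(D[H])$ with $\fp \cap (D^\bullet \cup H) = \emptyset$, which correspond bijectively to the height-one primes of the group algebra $K[G]$, where $K = \quo(D)$ and $G = \quo(H)$ (see \cite[Corollary 15.9]{gilmer84}). A minimal example already shows the problem: for $D=K$ a field and $H = \cF(\{X\}) \cong (\bN_0,+)$ one has $D[H] = K[X]$, and your two families account only for the single prime $(X)$, missing $(X-1)$, $(X^2+1)$, and so on.

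This omission is not cosmetic, because the third family is exactly where the difficulty of the theorem is concentrated. Since $\varphi$ permutes that family setwise, one must still rule out finite $\varphi_*$-stable subsets of it, and neither the $\sigma$-$v$-simplicity of $D$ nor the $\tau$-$v$-simplicity of $H$ applies to it directly. The paper handles this by localizing to $K[G]$, which is a Krull domain with \emph{trivial} class group, and proving (\cref{p-kg-simple}) that $K[G]$ is $\varphi$-$v$-simple: one equips $G$ with a total order compatible with $\tau$ (\cref{p-grp-order}), writes each nonzero proper principal ideal as $(1+f)$ with $\emptyset \ne \supp(f) \subset G_{>1}$ (the normed generator), and uses $\tau(\supp(f)) \ne \supp(f)$, which follows from the $\tau$-$v$-simplicity of $H$ via \cref{l-simple}, to conclude $\varphi(\fa) \ne \fa$. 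The bookkeeping you carry out with unique factorization in $\cF_v(D[H])^\times$ is fine for the two families you do consider, but without the $K[G]$ argument the proof does not close.
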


\begin{proof}
  Let $\varphi_*$ denote the permutation of $\pone(D[H])$ induced by $\varphi$.
  There are injective maps
  \begin{align*}
    \iota_D^*\colon&
    \begin{cases} \pone(D) &\to \pone(D[H]), \\
      \fp &\mapsto \fp[H],
    \end{cases}
    &
    \iota_H^*\colon&
    \begin{cases} \pone(H) &\to \pone(D[H]), \\
      \fp &\mapsto D[\fp].
    \end{cases}
  \end{align*}
  The image of $\iota_D^*$ consists of all $\fp \in \pone(D[H])$ with $\fp \cap D^\bullet \ne \emptyset$, while the image of $\iota_H^*$ consists of all $\fp \in \pone(D[H])$ with $\fp \cap H \ne \emptyset$.
  Let $K=\quo(D)$ and $G = \quo(H)$.
  The group algebra $K[G]$ is the localization of $D[H]$ by $H$ and $D^\bullet$.
  There is a bijection
  \[
  \iota_{K[G]}^* \colon
  \begin{cases}
      \{\, \fp \in \pone(D[H]) \mid \fp \cap (D^\bullet \cup H) = \emptyset \,\} &\to \pone(K[G]), \\
      \fp           & \mapsto    \fp K[G], \\
      \fP\cap D[H]  & \mapsfrom  \fP.
  \end{cases}
  \]
  In particular,
  \[
  \pone(D[H]) = \iota_D^*\pone(D) \;\cup\; \iota_H^*\pone(H) \;\cup\; (\iota_{K[G]}^*)^{-1}\pone(K[G]).
  \]
  All of this follows from \cite[Chapter III, Sections 15 and 16]{gilmer84}, together with the fact that nontrivial essential discrete valuation overmonoids (overrings) of commutative Krull monoids (domains) bijectively correspond to nonempty (nonzero) divisorial prime ideals.
  That $\iota_D^*$ takes the stated form follows from \cite[Theorem 15.3]{gilmer84}, and the corresponding fact for $\iota_H^*$ is a consequence of \cite[Theorem 15.7]{gilmer84}.
  The stated decomposition of $\pone(D[H])$ follows from \cite[Corollary 15.9]{gilmer84}.

  Since $\varphi(H) = H$ and $\varphi(D) = D$, each of the sets $\iota_D^*\pone(D)$, $\iota_H^*\pone(H)$, and $(\iota_{K[G]}^*)^{-1}\pone(K[G])$ is fixed by $\varphi_*$.
  To show $\varphi_*(X) \ne X$ for all finite $\emptyset \ne X \subset \pone(D[H])$, it therefore suffices to consider subsets of each of these three sets.
  If $X \subset \iota_D^*\pone(D)$, then $\varphi_*(X) \ne X$, since $D$ is $\sigma$-$v$-simple.
  If $X \subset \iota_H^*\pone(H)$, then $\varphi_*(X) \ne X$, since $H$ is $\tau$-$v$-simple.

  Finally, consider the case where $X \subset (\iota_{K[G]}^*)^{-1}\pone(K[G])$.
  Since $\varphi(H) \subset H$ and $\varphi(D^\bullet) \subset D^\bullet$, $\varphi$ extends to an automorphism of $K[G]$, which we again denote by $\varphi$.
  It now suffices to show that $K[G]$ is $\varphi$-$v$-simple.
  However, this follows from \cref{p-kg-simple}.
\end{proof}

The following \subref{l-loc:basic} is a slight reformulation of the original localization argument of Claborn, which can be found in  \cite[Theorem 14.2]{fossum73} and \cite[Theorem 7]{claborn66}.
Since we need to observe some details in the argument, we give the proof anyway.
Recall that if $D$ is a commutative domain and $a$,~$b \in D$ are coprime (that is, $aD \cap bD = abD$), then $aX + b$ is a prime element of $D[X]$ (see \cite[Lemma 14.1]{fossum73}).

\begin{lemma} \label{l-loc}
  Let $D$ be a commutative Krull domain, and let $H$ be a commutative Krull monoid containing a countable set $P$ of non-associated prime elements, so that $H = H_0 \times \cF(P)$ with a commutative Krull monoid $H_0$.
  Suppose that $D[H]$ is a Krull domain.
  \begin{enumerate}
    \item\label{l-loc:basic} There exists a multiplicative subset $S \subset D[H]^\bullet$ such that $S$ is generated by prime elements of $D[H]$, $S \cap D[H_0] = \emptyset$, and $S^{-1}D[H]$ is a Dedekind domain but not a field.
    \item\label{l-loc:auto} If $\varphi \in \Aut(D[H])$ with $\varphi(P) \subset P$, then $S$ can be chosen in such a way that $\varphi(S) \subset S$.
    \item\label{l-loc:prop}
      Let $S$ be a multiplicative subset of $D[H]^\bullet$ such that $S^{-1}D[H]$ is a Dedekind domain.
      Let $\varphi \in \Aut(D[H])$ be such that $D[H]$ is $\varphi$-$v$-simple and  $\varphi(S) \subset (S^{-1}D[H])^\times$.
      Then $\varphi$ extends to an automorphism $\varphi_S \in \Aut(S^{-1}D[H])$ and $S^{-1}D[H]$ is $\varphi_S$-simple.
  \end{enumerate}
\end{lemma}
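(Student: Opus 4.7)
The argument follows Claborn's original localization strategy, adapted to accommodate the automorphism $\varphi$. I first identify $D[H]$ with the polynomial extension $D[H_0][X_1, X_2, \ldots]$, in which each $p_i \in P$ corresponds to an indeterminate $X_i$ over $D[H_0]$. The coprime--prime principle recalled just before the lemma then applies: if $a, b \in D[H_0][X_1, \ldots, \hat X_i, \ldots]$ are coprime in $D[H]$, then $a X_i + b$ is a prime element of $D[H]$.

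For part (1), the plan is to build $S$ as the multiplicative set generated by a family of prime elements $\{\pi_\fP\}$, one for each prime $\fP$ of $D[H]$ of height $\ge 2$. Claborn's construction, applied inside $\fP$, produces such a $\pi_\fP \in \fP$ of the form $a X_i + b$ to which the coprime--prime principle applies, with $X_i$ appearing in neither $a$ nor $b$; in particular $\pi_\fP \notin D[H_0]$, so $S \cap D[H_0] = \emptyset$. Because $S$ is generated by prime elements of $D[H]$, Nagata's theorem (\cref{p-nagata}) preserves the class group and makes $S^{-1}D[H]$ a Krull domain; by construction every height $\ge 2$ prime of $D[H]$ meets $S$, so $\dim S^{-1}D[H] \le 1$, i.e., $S^{-1}D[H]$ is Dedekind. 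It is not a field provided some $p_i \in P$ is not used in $S$, so that its associated height 1 prime remains nonzero in the localization.

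For part (2), if $\varphi(P) \subset P$ then $\varphi$ permutes the $X_i$ and hence acts on the height $\ge 2$ primes of $D[H]$. Carrying out the construction of part (1) equivariantly along $\varphi$-orbits --- choosing a single $\pi_\fP$ per orbit and defining $\pi_{\varphi^n(\fP)} := \varphi^n(\pi_\fP)$ --- produces a $\varphi$-stable generating family and hence a $\varphi$-stable $S$. For part (3), the extension $\varphi_S \in \Aut(S^{-1}D[H])$ is immediate from the universal property of localization. To prove $\varphi_S$-simplicity, let $\fa$ be a nonzero proper $\varphi_S$-stable ideal. Since $S^{-1}D[H]$ is Dedekind, $\fa$ factors uniquely as $\fa = \fQ_1^{n_1} \cdots \fQ_r^{n_r}$ with $r \ge 1$. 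Contracting to the height 1 primes $\fp_i := \fQ_i \cap D[H]$ of $D[H]$ (none meeting $S$), form the divisorial ideal $\fb := \fp_1^{n_1} \cdot_v \cdots \cdot_v \fp_r^{n_r}$ of $D[H]$. The bijection between primes of $S^{-1}D[H]$ and height 1 primes of $D[H]$ disjoint from $S$ converts $\varphi_S$-stability of $\fa$ into $\varphi(\fb) = \fb$; by $\varphi$-$v$-simplicity of $D[H]$ this forces $\fb = D[H]$, so $r = 0$, contradicting the choice of $\fa$.

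The main obstacle is the concrete construction in part (1): producing the prime element $\pi_\fP$ inside each height $\ge 2$ prime $\fP$ with the constraint that it come from the coprime--prime template. The countability of $P$ is not itself a difficulty, since the same indeterminate $X_i$ may safely be reused across arbitrarily many primes $\fP$; what requires care is the choice of coprime $a, b$ inside each $\fP$, for which Claborn's argument adapts nearly verbatim.
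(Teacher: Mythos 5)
Your proposal is correct and follows essentially the same route as the paper: Claborn's coprime--prime construction of $f_\fP = a_\fP X_i + b_\fP$ inside each height-$\ge 2$ prime, an orbit-equivariant choice $\varphi^n(f_\fP)$ for part (2), and the bijection between $\pone(S^{-1}D[H])$ and the height-one primes of $D[H]$ missing $S$ for part (3). The only (harmless) divergence is in showing $S^{-1}D[H]$ is not a field, where you reserve an indeterminate whose height-one prime survives the localization, while the paper instead splits on whether $D[H_0]$ is a field and, in the degenerate case, exhibits a prime element not of degree one in any $X_i$.
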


\begin{proof}
\ref*{l-loc:basic}
We have $D[H]\cong D[H_0][\cF(P)] = D[H_0][\ldots,X_{-1},X_0,X_1,\ldots]$.
Let $\fP \in \spec(D[H])$ with $\height(\fP) > 1$, and let $a_\fP \in \fP^\bullet$.
Let $\fp_1$, $\ldots\,$,~$\fp_r$ be the divisorial prime ideals of $D[H]$ that contain $a_\fP$.
By prime avoidance, there exists an element $b_\fP \in \fP \setminus (\fp_1 \cup \ldots \cup \fp_r)$.
Let $X_\fP \in \{ \ldots, X_{-1},X_0,X_1, \ldots \}$ be such that $X_\fP$ is not contained in the support of $a_\fP$ or $b_\fP$.
Then $D[H] = R_0[X_\fP]$ with $R_0=D[H_0][\{\, X_i \mid i \in \bZ,\; X_i \ne X_\fP \,\}]$ and $a_\fP$,~$b_\fP \in R_0$ are coprime.
Hence $f_\fP = a_\fP X_\fP + b_\fP$ is a prime element of $D[H]$, and $f_\fP \in \fP$.
By construction, $f_\fP \notin D[H_0]$.

Let $Q = \{\, f_\fP \mid \fP \in \spec(D[H]),\, \height(\fP) > 1 \,\}$ and let $S$ be the multiplicative set generated by $Q$.
Since $\spec(S^{-1}D[H])$ is in bijection with $\{\, \fp \in \spec(D[H]) \mid \fp \cap S \ne \emptyset \,\}$, it follows that $S^{-1}D[H]$ is a Krull domain of dimension at most $1$, i.e., a Dedekind domain.
Moreover, $S \cap D[H_0] = \emptyset$.

If $D[H_0]$ is not a field, then neither is $S^{-1}D[H]$.
It only remains to consider the, degenerate, special case where $D[H_0]$ is a field, i.e., $H_0$ is the trivial monoid and $D=K$ is a field.
Then $D[H] = K[\ldots,X_{-1},X_0,X_1,\ldots]$ is a polynomial ring in countably many indeterminates.
By construction, $Q$ only contains elements with $X_i$-degree equal to $1$ for some $i \in \bZ$.
However, $D[H]$ contains prime elements which are not of this form (e.g., $X_1^2 + X_0^2 X_1 + X_0$).

\ref*{l-loc:auto}
If $\fP \in \spec(D[H])$ with $\height(\fP) > 1$, then also $\height(\varphi(\fP))>1$.
Thus $\varphi$ induces a permutation of prime ideals of height greater than $1$.
Denote a set of representatives for the orbits by $\Omega$.
For each $\fP$ in $\Omega$, choose $f_\fP$ as in \ref*{l-loc:basic}.
For all $n \in \bZ$, $\varphi^n(f_\fP)$ is a prime element contained in $\varphi^n(\fP)$.
Since $\varphi(P) \subset P$, we have $\varphi^n(f_\fP) \notin D[H_0]$.
Set $Q = \bigcup_{\fP \in \Omega} \bigcup_{n \in \bZ} \varphi^n(f_\fP)$, and let $S$ be the multiplicative subset of $D[H]^\bullet$ generated by $Q$.
Since $\varphi(Q) \subset Q$, also $\varphi(S) \subset S$.
Thus, $S$ has the stated properties.

\ref*{l-loc:prop}
Since $\varphi(S) \subset (S^{-1}D[H])^\times$, $\varphi$ extends to an automorphism $\varphi_S$ of $S^{-1}D[H]$.
Localization induces a bijection between $\{\, \fp \in \pone(D[H]) \mid \fp \cap S = \emptyset \,\}$ and $\pone(S^{-1}D[H])$.
Hence $S^{-1}D[H]$ is $\varphi_S$-$v$-simple.
Since $S^{-1}D[H]$ is a Dedekind domain, every ideal is divisorial.
Thus $S^{-1}D[H]$ is $\varphi_S$-simple.
\end{proof}

\begin{remark}
  Most of the technicalities in the previous proof can be avoided as long as $\cC(H)$ is non-trivial and we are not picky about whether or not $S \cap D[H_0] = \emptyset$.
  In this case, we take $S$ to be the multiplicative set generated by all prime elements of $D[H]$.
  Claborn's argument shows that each $\fP \in \spec(D[H])$ with $\height(\fP) > 1$ contains some prime element, so that indeed $\dim(S^{-1}D[H]) \le 1$.
  We have $\varphi(S) \subset S$, since prime elements are mapped to prime elements by $\varphi$.
  And, finally, since $\cC(H)$ is non-trivial, there must exist a non-principal divisorial prime ideal $\fp \in D[H]$.
  Then $\fp \cap S= \emptyset$, hence $\dim(S^{-1}D[H]) = 1$.
\end{remark}

\begin{thm} \label{t-extend}
  Let $D$ be a commutative Krull domain and let $\sigma \in \Aut(D)$ be such that $D$ is $\sigma$-$v$-simple.
  Let $H$ be a reduced commutative Krull monoid containing prime elements,
  and let $\tau$ be an automorphism of $H$ such that $H$ is $\tau$-$v$-simple.
  Let $\varphi\colon D[H]\to D[H]$ denote the extension of $\sigma$ and $\tau$ to $D[H]$, that is, $\varphi|_D = \sigma$ and $\varphi|_H = \tau$.
  Let $p \in H$ be a prime element and let $p^\tau = \{\, \tau^n(p) \mid n \in \bZ \,\}$ be its $\tau$-orbit, so that $H = H_0 \times \cF(p^\tau)$ for a Krull monoid $H_0$.
  \begin{enumerate}
    \item \label{t-extend:exist} There exists a multiplicative subset $S$ of the semigroup algebra $D[H]$ such that $S$ is generated by prime elements, $S \cap D[H_0] = \emptyset$, $\varphi(S) \subset S$, and the localization $S^{-1}D[H]$ is a Dedekind domain but not a field.
    \item \label{t-extend:clsgrp}
      Let $S \subset D[H]^\bullet$ be a multiplicative subset such that $R=S^{-1}D[H]$ is a Dedekind domain but not a field and $\varphi(S) \subset R^\times$.
      Let $A$ be the subgroup of $\cC(D) \times \cC(H) = \cC(D[H])$ generated by classes of $\fp \in \pone(D[H])$ with $\fp \cap S \ne \emptyset$.
      Then $\varphi$ extends to an automorphism $\varphi_S$ of $R$, the skew Laurent polynomial ring $T=R[x,x^{-1};\varphi_S]$ is a noncommutative simple Dedekind domain, and the following sequence of abelian groups is exact:
      \[
      \xymatrix@C=0.75cm{
        \cC(D) \times \cC(H)/A \ar[rr]^{\id - (\sigma_*,\tau_*)}
        & & \cC(D) \times \cC(H)/A \ar[r]^{\beta}
        & G\big(R[x,x^{-1};\varphi_S]\big) \ar[r]
        & \mathbf 0.
      }
      \]
      Here, $(\sigma_*,\tau_*)$ is the automorphism of $\cC(D) \times \cC(H)/A$ induced by $\sigma$ and $\tau$.
      The map $\beta$ is induced as follows: If $\fa \in \cF_v(D)^\times$, the class of $\fa$ is mapped to $\kcls{\fa[H] \otimes_R T} - \kcls{T}$.
      If $\fb \in \cF_v(H)^\times$, the class of $\fb$ is mapped to $\kcls{D[\fb] \otimes_R T} - \kcls{T}$.
  \end{enumerate}
\end{thm}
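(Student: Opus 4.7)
The plan splits along the two parts of the statement. For Part~\ref{t-extend:exist}, I would apply \subref{l-loc:auto} with the countable set $P = p^\tau$. To do so, one first observes that $D[H]$ is a Krull domain by \cref{p-krull-semigroup-domain}, since $D$ is Krull, $H$ is Krull, and $H$ is reduced so that $H^\times$ trivially satisfies the ascending chain condition on cyclic subgroups. Next, $\tau$-$v$-simplicity of $H$ together with \subref{l-simple:orbits} forces the $\tau$-orbit of the prime element $p$ to be infinite, so $p^\tau$ is a countably infinite set of pairwise non-associated prime elements of $H$, and by hypothesis $H = H_0 \times \cF(p^\tau)$. Since $\varphi(p^\tau) = \tau(p^\tau) = p^\tau$, \subref{l-loc:auto} then produces an $S$ with all the required properties.

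For Part~\ref{t-extend:clsgrp}, I would first apply \cref{t-extend-simple} to conclude that $D[H]$ is $\varphi$-$v$-simple, and then combine the hypothesis $\varphi(S) \subset R^\times$ with \subref{l-loc:prop} to extend $\varphi$ to $\varphi_S \in \Aut(R)$ and to deduce that $R$ is $\varphi_S$-simple. Since $R$ is a commutative Dedekind domain which is not a field, \cref{p-nc-dedekind} then yields that $T = R[x,x^{-1};\varphi_S]$ is a noncommutative simple Dedekind domain.

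For the exact sequence, the natural starting point is the short exact sequence
\[
\xymatrix@C=1.2cm{
\cC(R) \ar[r]^{\id - (\varphi_S)_*} & \cC(R) \ar[r]^{\beta_R} & G(T) \ar[r] & \mathbf 0
}
\]
derived at the end of \cref{s-prelim}, where $\beta_R([\fc]_R) = \kcls{\fc \otimes_R T} - \kcls{T}$. To rewrite $\cC(R)$ in the desired shape, I would invoke \cref{p-nagata} for the localization $D[H] \to R$: this identifies $\cC(R)$ with $\cC(D[H])$ modulo the subgroup generated by those $\fp \in \pone(D[H])$ meeting $S$. Composing with the canonical isomorphism $\cC(D[H]) \cong \cC(D) \times \cC(H)$ from \cref{p-krull-semigroup-domain}, this subgroup becomes exactly the group $A$ of the statement, and $(\varphi_S)_*$ corresponds to $(\sigma_*, \tau_*)$ by the discussion in \cref{s-prelim}. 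Tracing representatives of $\cC(D)$ and $\cC(H)$ through the identifications then yields the stated description of $\beta$ in terms of the extensions $\fa[H]$ and $D[\fb]$.

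The one substantive verification I expect is checking that $A$ is indeed $(\sigma_*, \tau_*)$-stable, so that the induced map on the quotient is well-defined. This follows because if $\fp \cap S \ne \emptyset$ then $\fp R = R$, hence $\varphi(\fp) R = \varphi_S(\fp R) = R$, which after clearing denominators forces $\varphi(\fp) \cap S \ne \emptyset$; thus $\varphi_*$ permutes the set of primes of $D[H]$ meeting $S$. Once this is in place, the argument is a direct assembly of \cref{t-extend-simple}, \cref{l-loc}, \cref{p-nagata}, \cref{p-krull-semigroup-domain}, and the preliminary $K_0$-computation from \cref{s-prelim}.
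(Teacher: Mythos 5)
Your proposal is correct and follows essentially the same route as the paper: part \ref{t-extend:exist} via \subref{l-loc:auto} applied to $P = p^\tau$, and part \ref{t-extend:clsgrp} by combining \cref{t-extend-simple}, \subref{l-loc:prop}, \cref{p-nc-dedekind}, Nagata's theorem, and the $K_0$-sequence from \cref{s-prelim}. The only differences are explicit verifications (that $D[H]$ is a Krull domain, and that $A$ is $(\sigma_*,\tau_*)$-stable) which the paper leaves implicit.
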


\begin{proof}
  \ref*{t-extend:exist}
  Since $\tau$ does not have finite orbits on $\pone(H)$, the orbit $p^\tau$ of $p$ consists of countably many non-associated prime elements.
  Let $S$ be a multiplicative subset of $D[H]$ as in \subref{l-loc:auto}, where we take $P=p^\sigma$.

  \ref*{t-extend:clsgrp}
  By \cref{t-extend-simple}, $D[H]$ is $\varphi$-$v$-simple.
  By \subref{l-loc:prop}, $\varphi$ extends to an automorphism $\varphi_S \in \Aut(R)$, and $R$ is $\varphi_S$-simple.
  By Nagata's Theorem and the identifications we have made, $\cC(D) \times \cC(H)/A \cong \cC(R)$ with the isomorphism given by $([\fa]_D,[\fb]_H) +A \mapsto [S^{-1}\fa[\fb]]$.
  Since $R$ is $\varphi_S$-simple, $T = R[x,x^{-1};\sigma]$ is a noncommutative simple Dedekind domain by \cref{p-nc-dedekind}.
  Under the identification of $\cC(D) \times \cC(H) / A$ with $G(R)$, the automorphism $(\sigma_*,\tau_*)$ corresponds to $\varphi_*$.
  The exact sequence of class groups follows from \cref{p-k0-ext} and the discussion that followed it.
\end{proof}

\begin{remark} \label{r-primes-techn}
  The technical condition that $H$ contains a prime element (and hence, since $\tau$ does not have any finite orbits on $\pone(H)$, infinitely many non-associated ones) is necessary so that $D[H]$ has the form $D[H_0][\ldots,X_{-1},X_0,X_1,\ldots]$ with $\varphi$ acting by $\varphi(X_i) = X_{i+1}$.
  The countably many indeterminates are used to construct the prime elements which generate $S$, see \cref{l-loc}.
  (See \cite[Proposition 14]{chang11} for a refinement that only needs one indeterminate.)
  If $H$ does not contain a prime element, we may replace $H$ by $H'=H \times \cF(\ldots,p_{-1},p_0,p_1,\ldots)$ and extend $\tau$ by $\tau(p_i)=p_{i+1}$.
  Then $H'$ satisfies the conditions of the theorem, and $\cC(H)\cong \cC(H')$.
  By formulating the theorem in the slightly more technical way, we avoid the need to enlarge $H$ if it already contains prime elements.
\end{remark}

\begin{proof}[Proof of Theorem 1.1]
  We assume without restriction that $\kappa$ is infinite.
  Let $G$ be an abelian group.
  \Cref{t-ex-mon-aut} implies that there exist a reduced commutative Krull monoid with $\cC(H) \cong G$ and an automorphism $\tau$ of $H$ such that $H$ is $\tau$-$v$-simple, $\tau_* \colon \cC(H) \to \cC(H)$ is the identity, and each divisorial ideal class of $H$ contains $\kappa$ nonempty divisorial prime ideals.
  Let $K$ be a field.
  Then $K$ is simple, and hence $\id_K$-$v$-simple.
  Let $\varphi\colon K[H] \to K[H]$ be the automorphism of $K[H]$ with $\varphi|_H = \tau$ and $\varphi|_K=\id_K$.
  Let $P \subset H$ be a countable set of prime elements such that $H\setminus P$ still contains $\kappa$ prime elements.
  Then $H = H_0 \times \cF(P)$ with a Krull monoid $H_0$.
  Each class of $\cC(H_0)$ contains $\kappa$ divisorial prime ideals.
  Applying \cref{t-extend}, we find a subset $S \subset K[H]$ such that $S \cap K[H_0] = \emptyset$, the localization $R=S^{-1}K[H]$ is a commutative Dedekind domain but not a field, and $T = S^{-1}K[H][x,x^{-1};\varphi_S]$ is a noncommutative simple Dedekind domain with $G(T) \cong G$.

  If $\fp \in \pone(H)$, then $K[\fp] \in \pone(K[H])$.
  If $\fp \notin \{\, (p) \mid p \in P \,\}$, then $K[\fp] \cap S = \emptyset$ by construction.
  In this case, $S^{-1}\fp$ is a nonzero prime ideal of $R$.
  Thus, each ideal class of $R$ contains at least $\kappa$ nonzero prime ideals.
  If $\fq$ is a nonzero prime ideal of $R$, then $\fq T$ is a maximal right ideal of $T$ by \cite[Lemma 6.9.15]{mcconnell-robson01}.
  Since $T$ is flat over $R$, we have $\fq T \cong \fq \otimes_R T$.
  If $\fp \in \pone(H)$, the isomorphism $\beta\colon \cC(H) \to G(T)$ maps $[\fp]$ to $\kcls{K[\fp] \otimes_R T} - \kcls{T}$.
  It follows that each class of $G(T)$ contains at least $\kappa$ maximal right ideals.
\end{proof}

\begin{remark}
  \begin{enumerate}
    \item We can only give a lower bound on the cardinality of maximal right ideals in each class.
      Apart from the divisorial prime ideals of the form $K[\fp]$ with $\fp \in \pone(H)$, additional divisorial prime ideals arise from prime elements of $K[\quo(H)]$.
      In \cite{chang11}, Chang has shown that if $D[H]$ is a commutative Krull domain and $H$ is non-trivial, then each divisorial ideal class contains a nonzero divisorial prime ideal.

    \item
      A domain $D$ is \emph{half-factorial} if every element of $D^\bullet$ can be written as a product of irreducibles and the number of irreducibles in each such factorization is uniquely determined.
      It is conjectured that every abelian group is the class group of a half-factorial commutative Dedekind domain.
      See \cite[\S5]{gilmer06} for background.
      The conjecture is equivalent to one purely about abelian groups (\cite[Proposition 3.7.9]{ghk06}).
      See \cite{geroldinger-goebel03} for progress on this question.
  \end{enumerate}
\end{remark}

\noindent
\textbf{Acknowledgments.}
\phantomsection
\addcontentsline{toc}{section}{Acknowledgments}
I thank Alfred Geroldinger for feedback on a preliminary version of this paper.

\bibliographystyle{hyperalphaabbr}
\bibliography{all}

\end{document}